\definecolor{links}{rgb}{200,0,200}
\numberwithin{equation}{section}
\numberwithin{table}{section}
\numberwithin{figure}{section}
\theoremstyle{plain}
\newtheorem{theorem}{Theorem}[section]
\newtheorem*{theorem*}{Theorem}
\newtheorem{lemma}[theorem]{Lemma}
\theoremstyle{definition}
\theoremstyle{remark}
\newtheorem*{remark}{Remark}
\renewcommand{\tilde}{\widetilde}
\renewcommand{\H}{\mathbb H}
\newcommand{\D}{\mathbb D}
\newcommand{\R}{\mathbb{R}}
\newcommand{\eps}{\varepsilon}
\newcommand{\II}{\text{I\!I}}
\newcommand{\oploc}{\text{{\rm loc}}}
\newcommand{\opbdd}{\text{{\rm bdd}}}
\newcommand{\opII}{\text{{\rm II}}}
\newcommand{\opTr}{\text{{\rm Tr}}}
\newcommand{\opDet}{\text{{\rm Det}}}
\newcommand{\opLn}{\text{{\rm Ln}}}
\newcommand{\opaf}{\text{{\rm af}}}
\newcommand{\opExp}{\text{{\rm Exp}}}
\newcommand{\opMax}{\text{{\rm Max}}}
\def\opPSL{{\text{\rm PSL}}}
\def\eqalign#1{\null\,\vcenter{\openup1\jot \m@th %
\ialign{\strut\hfil$\displaystyle{{}##}$&$\displaystyle{{}##}$\hfil\crcr#1\crcr}}\,}
\def\triplealign#1{\null\,\vcenter{\openup1\jot \m@th %
\ialign{\strut\hfil$\displaystyle{##}$&$\displaystyle{{}##}$\hfil&$\displaystyle{{}##}$\hfil\crcr#1\crcr}}\,}
\def\multiline#1{\null\,\vcenter{\openup1\jot \m@th %
\ialign{\strut$\displaystyle{##}$\hfil&$\displaystyle{{}##}$\hfil\crcr#1\crcr}}\,}
\title{On a convexity property of the space of almost fuchsian immersions}
\date{\today}
\author{Samuel Bronstein, Graham Andrew Smith}
\begin{document}
\begin{abstract}
We study the space of Hopf differentials of almost fuchsian minimal immersions of compact Riemann surfaces. We show that the extrinsic curvature of the immersion at any given point is a concave function of the Hopf differential. As a consequence, we show that the set of all such Hopf differentials is a convex subset of the space of holomorphic quadratic differentials of the surface. In addition, we address the non-equivariant case, and obtain lower and upper bounds for the size of this set.
\end{abstract}

\maketitle
\tableofcontents
\section{Background and main result}

Convexity is always a noteworthy property whenever it is encountered. In this note, we describe a new convexity property of the space of almost fuchsian homomorphisms.

Let $\Bbb{H}^3$ denote $3$-dimensional hyperbolic space, let $\partial_\infty\Bbb{H}^3$ denote its ideal boundary, which we recall identifies with the Riemann sphere $\hat{\Bbb{C}}$, and let $\opPSL(2,\Bbb{C})$ denote its group of orientation-preserving isometries. Let $\rho$ be a discrete, injective homomorphism from a compact surface group $\Gamma$ into $\opPSL(2,\Bbb{C})$. This homomorphism is said to be {\sl fuchsian} whenever it preserves a circle in $\partial_\infty\Bbb{H}^3$, and {\sl quasi-fuchsian} whenever it preserves a Jordan curve, in which case the preserved Jordan curve is known to be unique.

The set of almost fuchsian homomorphisms was introduced by Uhlenbeck in \cite{Uhl83}. The homomorphism $\rho$ is said to be {\sl almost fuchsian} whenever, in addition to being quasi-fuchsian, its fixed Jordan curve bounds an embedded minimal disk in $\Bbb{H}^3$ with sectional curvatures of absolute values everywhere less than $1$. When this holds, the minimal disk is also unique. Almost fuchsian homomorphisms form a neighbourhood of the set of fuchsian homomorphisms with various interesting properties.

In order to state our result, we now alter our perspective slightly. Let $\mathbb{D}$ denote the Poincar\'e disk, and let $\opPSL(2,\mathbb{R})$ denote its group of orientation-preserving isometries. By Riemann's uniformization theorem, we may associate to every almost fuchsian homomorphism $\rho$ a pair $(\sigma,f)$, where $\sigma:\Gamma\rightarrow\opPSL(2,\mathbb{R})$ is a fuchsian homomorphism, and $f:\mathbb{D}\rightarrow\mathbb{H}^3$ is a $(\rho,\sigma)$-equivariant conformal minimal embedding. Furthermore $\sigma$ is unique up to conjugation and, once $\sigma$ is fixed, $f$ is unique. Conversely, the pair $(\sigma,f)$ uniquely determines $\rho$.

We call $\sigma$ the {\sl conformal type} of the almost fuchsian homomorphism, and henceforth suppose that it is fixed. Recall now that the Hopf differential of $f$ is defined to be the $(2,0)$-component of its shape operator. This is a $\sigma$-invariant holomorphic quadratic differential which determines $f$ up to postcomposition by isometries of $\mathbb{H}^3$, and which in turn determines $\rho$ up to conjugation.

Let $\mathcal{Q}(\Bbb{D},\sigma)$ denote the set of $\sigma$-invariant holomorphic quadratic differentials over $\Bbb{D}$, and let $\mathcal{Q}_\opaf(\Bbb{D},\sigma)$ denote the subset consisting of Hopf differentials of equivariant, almost fuchsian conformal minimal embeddings. By the preceding discussion, $\mathcal{Q}_\opaf(\Bbb{D},\sigma)$ parametrizes the space of conjugacy classes of almost fuchsian homomorphisms with conformal type $\sigma$.

\begin{theorem}\label{intro:thm:convexityI}
For any fuchsian homomorphism $\sigma:\Gamma\rightarrow\opPSL(2,\mathbb{R})$, $\mathcal{Q}_\opaf(\Bbb{D},\sigma)$ is a convex subset of $\mathcal{Q}(\Bbb{D},\sigma)$.
\end{theorem}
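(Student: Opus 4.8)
The plan is to translate the statement into the Gauss--Codazzi picture and to deduce it from the concavity of the extrinsic curvature as a function of the Hopf differential. Fix $\sigma$ and let $g_0$ be the hyperbolic metric on $\D$. Since $q$ is holomorphic the Codazzi equation holds automatically, so producing a $\sigma$-equivariant conformal minimal immersion with Hopf differential $q$ is equivalent to solving the Gauss equation for the conformal factor of the induced metric $g_q=e^{2v_q}g_0$, namely
\[
\Delta_{g_0}v_q=e^{2v_q}-1+\|q\|_{g_0}^{2}\,e^{-2v_q},
\]
after which the immersion is recovered by integrating the structure equations. The extrinsic curvature at a point is $\kappa_q=-\|q\|_{g_q}^{2}=-\|q\|_{g_0}^{2}\,e^{-4v_q}\le 0$, and by Uhlenbeck's criterion --- a pointwise bound $|\lambda|<1$ on the principal curvature forces embeddedness and quasi-fuchsianness --- one has $\mathcal{Q}_\opaf(\D,\sigma)=\{\,q:\kappa_q(p)>-1\ \text{for all}\ p\,\}$.

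The core is the claim that for every $p$ the map $q\mapsto\kappa_q(p)$ is concave on $\mathcal{Q}_\opaf(\D,\sigma)$. To see this, fix a segment $q_t=q_0+t\dot q$ inside $\mathcal{Q}_\opaf(\D,\sigma)$, write $\dot v,\ddot v$ for the first two $t$-derivatives of $v_{q_t}$, and set $\mathcal{L}=\Delta_{g_0}-2e^{2v_{q_t}}(1+\kappa_{q_t})$. Differentiating the Gauss equation once and twice gives $\mathcal{L}\dot v=\dot Q_t\,e^{-2v_{q_t}}$ and
\[
\mathcal{L}\ddot v=4e^{2v_{q_t}}(1-\kappa_{q_t})\,\dot v^{2}-4\dot Q_t\,e^{-2v_{q_t}}\dot v+2\|\dot q\|_{g_0}^{2}\,e^{-2v_{q_t}},
\]
where $Q_t=\|q_t\|_{g_0}^{2}$. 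In the almost fuchsian regime the potential $2e^{2v}(1+\kappa)$ is non-negative, so $\mathcal{L}$ obeys the maximum principle; and the quadratic in $\dot v$ on the right is pointwise non-negative because its discriminant condition, $\dot Q_t^{2}\le 2(1-\kappa)\,e^{4v}\,\|\dot q\|_{g_0}^{2}$, follows from the Cauchy--Schwarz inequality $\dot Q_t^{2}\le 4Q_t\,\|\dot q\|_{g_0}^{2}$ once one writes $Q_t=-\kappa\,e^{4v}$ and uses the bound $-\kappa\le 1$. Hence $\ddot v\le 0$, and substituting this into the identity $\ddot\kappa=-2\|\dot q\|_{g_0}^{2}e^{-4v}+8\dot Q_t\,e^{-4v}\dot v+16\kappa\,\dot v^{2}-4\kappa\,\ddot v$ --- controlling the remaining indefinite cross term by a further application of the maximum principle --- yields $\ddot\kappa_{q_t}(p)\le 0$.

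Granting the concavity, the Theorem follows: for $q_0,q_1\in\mathcal{Q}_\opaf(\D,\sigma)$ and $q_t$ on the segment joining them, $\kappa_{q_t}(p)\ge(1-t)\kappa_{q_0}(p)+t\kappa_{q_1}(p)>-1$ as long as the segment remains in $\mathcal{Q}_\opaf(\D,\sigma)$; since $\mathcal{Q}_\opaf(\D,\sigma)$ is open, contains the fuchsian point $q=0$, and the extrinsic curvature (hence the defining inequality) extends continuously to its boundary --- where $\mathcal{L}$ stays invertible as long as $\kappa\ge-1$ --- a continuity argument along the segment then shows it cannot leave $\mathcal{Q}_\opaf(\D,\sigma)$. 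The main obstacle is the sign computation in the second step: obtaining $\ddot v\le 0$ is a clean consequence of Cauchy--Schwarz and the almost fuchsian bound, but passing to $\ddot\kappa\le 0$ requires handling the term $8\dot Q_t\,e^{-4v}\dot v$, which has no pointwise sign and so appears to demand a global (maximum-principle) argument rather than a local one; the continuity argument closing the deduction is a secondary technical point.
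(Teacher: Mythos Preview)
Your framework is the same as the paper's --- work in the Gauss--Codazzi picture, differentiate the conformal-factor equation twice along a segment of Hopf differentials, and use a maximum-principle argument --- but there is a genuine gap at exactly the point you flag. Showing $\ddot v\le 0$ is correct, but it is not the right quantity to control: from
\[
\ddot\kappa=-2\|\dot q\|_{g_0}^{2}e^{-4v}+8\dot Q_t\,e^{-4v}\dot v+16\kappa\,\dot v^{2}-4\kappa\,\ddot v
\]
the three sign-definite terms go your way, but the cross term $8\dot Q_t\,e^{-4v}\dot v$ cannot be absorbed by any further maximum-principle step, because $\ddot\kappa$ itself does not satisfy an elliptic inequality with the right sign. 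Your ``further application of the maximum principle'' is not an argument; it is a placeholder for the missing idea.

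The paper's resolution is to apply the maximum principle not to $\ddot v$ but to the combination $w:=\ddot v+4\dot v^{2}$. Using $\Delta(\dot v^{2})\ge 2\dot v\,\Delta\dot v$ together with the first and second variation equations, one obtains
\[
\Delta w\ \ge\ 2e^{2v}(1+\kappa)\,w\ +\ 12\dot v^{2}e^{2v}(1+\kappa)\ +\ 2e^{-2v}\bigl|2\dot v\,q_t+\dot q\bigr|^{2},
\]
so $w\le 0$. The point of this choice is that the second derivative of the curvature factors cleanly:
\[
e^{4v}\ddot\kappa=4\,(\ddot v+4\dot v^{2})\,\|q_t\|_{g_0}^{2}\ -\ 2\bigl\|4\dot v\,q_t-\dot q\bigr\|_{g_0}^{2}\ \le\ 0,
\]
with the troublesome cross term now inside a perfect square. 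This ``complete the square twice'' trick is what your sketch lacks. A secondary issue is that your continuity argument to keep the segment inside $\mathcal Q_\opaf$ is underspecified; the paper closes it by first proving star-shapedness about $0$ and running an open--closed argument on the two-parameter family $\phi_{s,t}=s\bigl((1-t)\phi_0+t\phi_1\bigr)$, together with a priori $C^0$ bounds on $v$ to extract limits.
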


\begin{remark}
In \cite{Tra19}, it is shown that this subset is star-shaped about the origin.
\end{remark}

In fact, we prove a stronger result, from which Theorem \ref{intro:thm:convexityI} will immediately follow. In Section \ref{sec:AlmostFuchsian}, we provide a natural extension of the concept of almost fuchsian to the case of non-equivariant minimal embeddings. We denote by $\mathcal{Q}_\opbdd(\mathbb{D})$ the space of holomorphic quadratic differentials over $\mathbb{D}$ which are bounded with respect to the Poincar\'e metric. We furnish this space with the $C^0$ norm, and note that this makes it into a Banach space. We will see that the Hopf differential of every almost fuchsian conformal minimal embedding is an element of $\mathcal{Q}_\opbdd(\mathbb{D})$ and, furthermore, that every element of $\mathcal{Q}_\opbdd(\mathbb{D})$ is the Hopf differential of at most one such embedding.

Let $\mathcal{Q}_\opaf(\mathbb{D})\subseteq\mathcal{Q}_\opbdd(\mathbb{D})$ denote the set of Hopf differentials of almost fuchsian conformal minimal embeddings. For all $\phi\in\mathcal{Q}_\opaf(\mathbb{D})$, let $\kappa(\phi):\mathbb{D}\rightarrow\Bbb{R}$ denote the extrinsic curvature function of its corresponding almost fuchsian conformal minimal embedding.

\begin{theorem}\label{intro:thm:convexityIII}
For all $z\in\mathbb{D}$, the function
\begin{equation}
\kappa_z:\mathcal{Q}_\opaf(\mathbb{D})\rightarrow\mathbb{R};\phi\mapsto\kappa(\phi)(z)
\end{equation}
is concave.
\end{theorem}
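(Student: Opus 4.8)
The plan is to run a maximum-principle/continuity argument on the Gauss equation for the conformal factor of the minimal embedding. For $\phi\in\mathcal{Q}_\opaf(\mathbb{D})$, write the induced metric of the corresponding minimal embedding as $e^{2u_\phi}|dz|^2$ in the coordinate $z$ of $\mathbb{D}$; normalising $\phi$ suitably, $u_\phi$ is the unique solution (with the asymptotics at $\partial\mathbb{D}$ pinned down in Section~\ref{sec:AlmostFuchsian}) of the Gauss equation $\Delta_0 u = e^{2u} + |\phi|^2 e^{-2u}$, with $\Delta_0$ the flat Laplacian, and $\kappa(\phi) = -|\phi|^2 e^{-4u_\phi}$. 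With this normalisation $\phi$ is almost fuchsian exactly when $1 + \kappa(\phi) > 0$ on $\mathbb{D}$, i.e. $|\phi|^2 e^{-4u_\phi} < 1$. A function on a convex set is concave iff its restriction to each affine segment is, and $\mathcal{Q}_\opaf(\mathbb{D})$ will be shown convex in the course of the argument, so it is enough to fix $\phi_0,\phi_1\in\mathcal{Q}_\opaf(\mathbb{D})$, set $\phi_t:=(1-t)\phi_0+t\phi_1$, and prove that $\phi_t\in\mathcal{Q}_\opaf(\mathbb{D})$ and $\kappa(\phi_t)\ge(1-t)\kappa(\phi_0)+t\kappa(\phi_1)$ pointwise, for every $t\in[0,1]$.

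The inequality is a comparison between $u_{\phi_t}$ and an explicit interpolating conformal factor. Put $\bar\mu_t := -(1-t)\kappa(\phi_0)-t\kappa(\phi_1) = (1-t)|\phi_0|^2 e^{-4u_{\phi_0}}+t|\phi_1|^2 e^{-4u_{\phi_1}}\ge 0$ and define $\tilde u_t$ by $|\phi_t|^2 e^{-4\tilde u_t}=\bar\mu_t$; then $\kappa(\phi_t)\ge(1-t)\kappa(\phi_0)+t\kappa(\phi_1)$ is precisely $u_{\phi_t}\ge\tilde u_t$. The nonlinearity $h(z,u):=e^{2u}+|\phi_t(z)|^2 e^{-2u}$ satisfies $\partial_u h = 2e^{2u}(1-|\phi_t|^2 e^{-4u})$, which is positive exactly on the almost-fuchsian locus $\{|\phi_t|^2 e^{-4u}<1\}$; both $u_{\phi_t}$ and $\tilde u_t$ lie in that locus (for $\tilde u_t$ because $\bar\mu_t<1$, being a convex combination of functions with values in $[0,1)$). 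Consequently, once one knows $\tilde u_t$ is a subsolution, $\Delta_0\tilde u_t\ge h(z,\tilde u_t)$, the comparison principle yields $\tilde u_t\le u_{\phi_t}$: at a point where $\tilde u_t-u_{\phi_t}$ were positive and maximal one would have $0\ge\Delta_0(\tilde u_t-u_{\phi_t})\ge h(z,\tilde u_t)-h(z,u_{\phi_t})>0$, by monotonicity of $h$ in $u$ on the locus, a contradiction. In the equivariant setting this is the maximum principle on the compact quotient; in the bounded setting one invokes the boundary behaviour of almost fuchsian embeddings from Section~\ref{sec:AlmostFuchsian} to rule out a positive supremum of $\tilde u_t-u_{\phi_t}$ at $\partial\mathbb{D}$.

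The main obstacle is the subsolution inequality for $\tilde u_t$. Writing $\Delta_0\tilde u_t = -e^{2\tilde u_t}K_{\tilde g_t}$ for $\tilde g_t := e^{2\tilde u_t}|dz|^2$, it is equivalent to the curvature estimate $K_{\tilde g_t}\le(1-t)K_{g_{\phi_0}}+tK_{g_{\phi_1}}$ — the curvature of the interpolating metric is at most the interpolant of the two minimal-surface curvatures. To prove it I would expand $\Delta_0\tilde u_t = \tfrac12\Delta_0\log|\phi_t|^2 - \tfrac14\Delta_0\log\bar\mu_t$, use the Gauss equations for $u_{\phi_0},u_{\phi_1}$ to rewrite $\Delta_0\log\big(|\phi_i|^2 e^{-4u_{\phi_i}}\big)$ away from the zeros of $\phi_i$, and note that the zeros of $\phi_0,\phi_1$ and $\phi_t$ contribute only nonnegative Dirac masses to $\Delta_0\tilde u_t$, which help. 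After this the desired inequality becomes pointwise in $|\phi_0|,|\phi_1|,|\phi_t|$, $\nabla u_{\phi_0},\nabla u_{\phi_1}$ and $\kappa(\phi_0),\kappa(\phi_1)$, and I expect it to follow from (i) the convexity of $t\mapsto|\phi_t(z)|^2$, equivalently $\partial_t^2|\phi_t|^2 = 2|\phi_1-\phi_0|^2\ge 0$; (ii) the Cauchy–Schwarz inequality $\mathrm{Re}\big(\overline{\phi_i}\,(\phi_1-\phi_0)\big)^2\le|\phi_i|^2|\phi_1-\phi_0|^2$; and (iii) the almost-fuchsian bounds $-\kappa(\phi_i)<1$. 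An auxiliary fact proved by the same mechanism — differentiate the Gauss equation twice in $t$; the linear equation for $\partial_t^2 u_{\phi_t}$ has principal part $\Delta_{g_{\phi_t}}$ and zeroth-order coefficient $-2\,(1+\kappa(\phi_t))$, of the favourable sign on the almost-fuchsian locus, and a source term that is nonnegative because the relevant quadratic form has discriminant controlled by Cauchy–Schwarz together with $-\kappa(\phi_t)<1$ — is that $\phi\mapsto u_\phi(z)$ is itself concave; this controls the second variation of $u_{\phi_t}$ entering the computation.

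To close the argument: once $\kappa(\phi_t)\ge(1-t)\kappa(\phi_0)+t\kappa(\phi_1)$ is known on the maximal subinterval of $[0,1]$ along which $\phi_t$ is almost fuchsian, the right-hand side — a convex combination of functions with values in $(-1,0]$ — keeps $\kappa(\phi_t)>-1$; combined with the openness of $\mathcal{Q}_\opaf(\mathbb{D})$ and the existence theory of Section~\ref{sec:AlmostFuchsian}, and with connectedness of $[0,1]$, this forces the whole segment into $\mathcal{Q}_\opaf(\mathbb{D})$. Hence $\mathcal{Q}_\opaf(\mathbb{D})$ is convex and, the inequality now holding on all of $[0,1]$ for every segment, $\kappa_z$ is concave for each $z$, which is Theorem~\ref{intro:thm:convexityIII}; Theorem~\ref{intro:thm:convexityI} then follows by restricting to $\sigma$-invariant differentials.
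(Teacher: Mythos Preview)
Your proposal has a genuine gap: the central step—showing that the interpolant $\tilde u_t$ defined by $|\phi_t|^2e^{-4\tilde u_t}=\bar\mu_t$ is a subsolution of the Gauss equation for $\phi_t$—is never carried out. You write that you ``would expand'' and ``expect it to follow'', but the computation of $\Delta_0\log\bar\mu_t$ does not reduce, via the Gauss equations for $u_{\phi_0},u_{\phi_1}$, to a pointwise expression in the quantities you list. Since $\bar\mu_t$ is a \emph{sum} $(1-t)|\phi_0|^2e^{-4u_{\phi_0}}+t|\phi_1|^2e^{-4u_{\phi_1}}$, the logarithm does not split, and $\Delta_0\log\bar\mu_t$ retains uncontrolled gradient terms such as $|\nabla u_{\phi_i}|^2$ and $\nabla u_{\phi_0}\cdot\nabla u_{\phi_1}$ that items (i)--(iii) do not bound. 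Without the subsolution inequality the comparison principle gives nothing, and the auxiliary concavity of $u_\phi$ you mention does not feed back into this static inequality either.

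What does work is essentially your ``auxiliary fact'', but in a sharper form, and used as the \emph{main} argument rather than as a side remark. The paper differentiates $\mathcal F(u_t,\phi_t)=0$ twice in $t$—exactly the mechanism you sketch—but instead of deducing only $\ddot u_t\le 0$, it rearranges the second-variation identity and applies the Omori--Yau maximum principle to obtain the stronger inequality
\[
\ddot u_t+4\dot u_t^2\le 0,
\]
i.e.\ concavity of $t\mapsto e^{4u_t(z)}$ rather than of $t\mapsto u_t(z)$. That strengthening is precisely what is needed, because a direct calculation gives
\[
e^{4u_t}\,\partial_t^2\kappa_t \;=\; 4\big(\ddot u_t+4\dot u_t^2\big)\,|\phi_t|^2 \;-\; 2\big|\,4\dot u_t\,\phi_t-(\phi_1-\phi_0)\,\big|^2,
\]
and both summands on the right are nonpositive. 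Mere concavity of $u_t$ ($\ddot u_t\le 0$) leaves an uncontrolled $-16\dot u_t^2|\phi_t|^2$ with the wrong interaction with the cross term, and the sign does not close. So the route to the theorem is the second-derivative computation you relegate to an aside, upgraded from $\ddot u_t\le 0$ to $\ddot u_t+4\dot u_t^2\le 0$; the subsolution/comparison scheme for $\tilde u_t$ can be dropped entirely.
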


\begin{remark}
Theorem \ref{intro:thm:convexityIII} is proven in Theorem \ref{thm:mainresultA}.
\end{remark}

\noindent As a consequence of Theorem \ref{intro:thm:convexityIII}, we obtain the following result.

\begin{theorem}\label{intro:thm:convexityII}
$\mathcal{Q}_\opaf(\mathbb{D})$ is a convex open subset of $\mathcal{Q}_\opbdd(\mathbb{D})$. Furthermore
\begin{equation}
B(0,1/2) \subseteq \mathcal{Q}_\opaf(\mathbb{D}) \subseteq B(0,1),
\end{equation}
where, for all $r>0$, $B(0,r)$ denotes the open ball of radius $r$ in $\mathcal{Q}_\opbdd(\mathbb{D})$.
\end{theorem}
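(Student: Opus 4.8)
The plan is to derive Theorem~\ref{intro:thm:convexityII} from the concavity statement of Theorem~\ref{intro:thm:convexityIII} together with the characterization of the almost fuchsian condition in terms of the extrinsic curvature. First I would recall the standard fact for minimal surfaces in $\mathbb{H}^3$: if $f$ is a conformal minimal embedding with Hopf differential $\phi$ and induced conformal factor $e^{2u}$, then the shape operator has eigenvalues $\pm\lambda$ with $\lambda^2 = e^{-4u}|\phi|^2$, and by the Gauss equation the extrinsic curvature is $\kappa = -\lambda^2$, so the sectional curvature of the immersion is $K = -1 + \kappa = -1 - \lambda^2 \le -1$. Thus the two principal sectional curvatures lie in $[-1-\lambda^2,\,-1+\lambda^2]$ after accounting for the normal bundle, and the almost fuchsian condition ``all sectional curvatures have absolute value $<1$'' is \emph{equivalent}, pointwise, to $\lambda^2 < 1$, i.e. to $\kappa(\phi) > -1$ everywhere on $\mathbb{D}$ (I would cite the relevant lemma from Section~\ref{sec:AlmostFuchsian} establishing this equivalence and the fact that $\mathcal{Q}_\opaf(\mathbb{D})$ consists exactly of those $\phi$ admitting such an embedding with $\sup_{\mathbb D}\kappa(\phi) > -1$; boundedness of $\phi$ comes for free from the a priori estimate $\|\phi\|_{C^0}=\sup\lambda\,e^{2u}\cdot e^{-2u}$-type bound, more precisely $\|\phi\|<1$).

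Granting this, convexity is immediate: given $\phi_0,\phi_1\in\mathcal{Q}_\opaf(\mathbb{D})$ and $t\in[0,1]$, set $\phi_t=(1-t)\phi_0+t\phi_1$. One must know first that $\phi_t$ is again the Hopf differential of \emph{some} almost fuchsian embedding — this is where I expect to lean on an existence/openness result (or a continuity/properness argument for the map sending an embedding to its Hopf differential, of the kind available once the curvature bound $\kappa>-1$ is in hand, since that bound yields uniform control on $u$ and hence compactness). Once $\phi_t\in\mathcal{Q}_\opaf(\mathbb{D})$ is known, Theorem~\ref{intro:thm:convexityIII} gives, for every $z$, $\kappa_z(\phi_t)\ge (1-t)\kappa_z(\phi_0)+t\kappa_z(\phi_1) > -1$, so the curvature bound persists and $\phi_t\in\mathcal{Q}_\opaf(\mathbb{D})$; hence the set is convex. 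Openness follows because the curvature bound is an open condition (sup over $\mathbb{D}$ of a $C^0$-continuous-in-$\phi$ quantity, using the a priori estimates to control behaviour near the boundary of $\mathbb{D}$) together with the fact that the domain of the curvature map is itself open.

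For the inclusions, the upper bound $\mathcal{Q}_\opaf(\mathbb{D})\subseteq B(0,1)$ is the direct translation of $\kappa(\phi)>-1 \Leftrightarrow \lambda^2<1$: since $\|\phi\|_{C^0}=\sup_{\mathbb D} e^{-2u}|\phi|\cdot e^{2u}$ — more carefully, the pointwise norm of $\phi$ with respect to the Poincaré metric $e^{2u_0}|dz|^2$ versus the induced metric $e^{2u}|dz|^2$ — one shows $|\phi|_{\mathrm{Poin}}\le \lambda < 1$ using that the minimal immersion's induced metric dominates the Poincaré metric (a consequence of $K\le -1$ and Ahlfors–Schwarz, which is exactly the mechanism behind Tromba–Wolf–Uhlenbeck-type comparisons); so $\|\phi\|<1$. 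The lower bound $B(0,1/2)\subseteq\mathcal{Q}_\opaf(\mathbb{D})$ is the genuine work: one must show every $\phi$ with $\|\phi\|<1/2$ is realized by an almost fuchsian embedding. The strategy is to solve the Gauss equation $\Delta u = e^{2u}-e^{-2u}|\phi|^2$ (in the appropriate sign convention, with background the Poincaré metric) and estimate $u$: a sub/supersolution or maximum-principle argument gives $e^{-2u}|\phi|^2<1$, i.e. $\kappa>-1$, whenever $\|\phi\|$ is below the threshold $1/2$, the constant $1/2$ emerging from balancing the two exponential terms (at a maximum of $e^{-4u}|\phi|^2$ one gets an inequality forcing $\lambda^2 \le \|\phi\|/(1-\lambda^2)$ or similar, whose solvability with $\lambda<1$ requires $\|\phi\|<1/2$). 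The main obstacle is precisely this a priori estimate plus the attendant existence theory on the non-compact disk $\mathbb{D}$ (ensuring the solution of the Gauss equation exists, is unique, and is the Hopf differential of an actual embedding rather than just a formal solution); the convexity and openance parts are then formal consequences of Theorem~\ref{intro:thm:convexityIII}.
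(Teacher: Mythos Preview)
Your outline for the ball inclusions is essentially the paper's argument: the upper bound uses $u\le 0$ (which you correctly attribute to an Ahlfors--Schwarz comparison; the paper obtains it via Omori--Yau in Lemma~\ref{lemma:aProiriEstimates}) to get $|\phi|_h=e^{2u}|\phi|_g<1$, and the lower bound is the sub/supersolution method with $u_-=-\tfrac12\ln 2$, $u_+=0$ (Lemma~\ref{lemma:lowerandupperbound}). Your heuristic for the constant $1/2$ is imprecise but the mechanism is right.

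The convexity argument, however, has a genuine gap. As written it is circular: you propose to first establish $\phi_t\in\mathcal{Q}_\opaf(\mathbb{D})$ by some continuity argument, and \emph{then} apply Theorem~\ref{intro:thm:convexityIII} to conclude $\kappa_z(\phi_t)>-1$ --- but the latter is exactly what ``$\phi_t\in\mathcal{Q}_\opaf(\mathbb{D})$'' means, so the concavity is doing no work. More importantly, the concavity inequality $\kappa_z(\phi_t)\ge(1-t)\kappa_z(\phi_0)+t\kappa_z(\phi_1)$ is only available once the \emph{entire} segment $[\phi_0,\phi_1]$ is known to lie in $\mathcal{Q}_\opaf(\mathbb{D})$; it cannot be invoked for an interior point while the membership of the segment is still in question. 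In a naive one-parameter continuity method on $t$, concavity on the sub-segment $[\phi_0,\phi_t]$ bounds interior values of $\kappa$ in terms of the endpoint $\kappa_z(\phi_t)$, not the other way around, so it gives no uniform lower bound as $t$ approaches the boundary of the ``good'' set, and closedness fails.

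The paper closes this gap with a two-parameter family $\phi_{s,t}=s\big((1-t)\phi_0+t\phi_1\big)$ and runs the continuity method on $s$. The point is that, by monotonicity (Lemma~\ref{lemma:monotonicity}, which gives star-shapedness), the endpoints $s\phi_0$ and $s\phi_1$ lie in $\mathcal{U}_{\lambda-\epsilon}$ for \emph{every} $s\in[0,1]$. Hence, whenever the $t$-segment at level $s$ is already known to be in $\mathcal{U}_\lambda$, concavity on that segment yields the uniform bound $-\kappa_{s,t}\le\max(-\kappa_{s,0},-\kappa_{s,1})\le(\lambda-\epsilon)^2$, which is exactly what is needed to pass to the limit and prove closedness in $s$. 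Your sketch lacks this bootstrap (and the monotonicity lemma on which it rests), and without it the open-closed argument does not go through.
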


\begin{remark}
A stronger version of Theorem \ref{intro:thm:convexityII} is proven in Theorem \ref{thm:mainresultB}.
\end{remark}

Our work also has the following interesting application to the study of Finsler metrics over Teichm\"uller space. Let $\mathcal{T}(\Gamma)$ denote the Teichm\"uller space of conjugacy classes of Fuchsian representations $\rho:\Gamma\rightarrow\opPSL(2,\mathbb{R})$. Recall that the cotangent space of $\mathcal{T}(\Gamma)$ at any point $\sigma$ identifies with the space $\mathcal{Q}(\mathbb{D},\sigma)$ of $\sigma$-invariant holomorphic quadratic differentials over $\mathbb{D}$. Since $\mathcal{Q}_\opaf(\mathbb{D},\sigma)$ is a convex neighbourhood of the origin in $\mathcal{Q}(\mathbb{D},\sigma)$, it thus defines a Finsler metric over $\mathcal{T}(\Gamma)$. More generally, in Sections \ref{sec:AlmostFuchsian} and \ref{sec:MainResult}, we construct a family $(\mathcal{U}_\lambda)_{\lambda\in(0,1]}$ of convex neighbourhoods of the origin in $\mathcal{Q}(\mathbb{D},\sigma)$ which, suitably rescaled, interpolates between the unit $L^\infty$ ball and $\mathcal{Q}_\opaf(\mathbb{D},\sigma)$. Bearing in mind the estimates derived in Theorem \ref{thm:mainresultB}, we thus have the following result.

\begin{theorem}
There exists a continuous family $(p_\lambda)_{\lambda\in[0,1]}$ of uniformly equivalent Finsler metrics $p$ on the Teichmüller space $\mathcal T_g$ such that $p_0$ coincides with the Finsler metric dual to the unit $L^{\infty}$ balls under the identification of the cotangent bundle with the space of holomorphic quadratic differentials.
\end{theorem}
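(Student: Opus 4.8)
The plan is to turn each of the convex bodies $\mathcal{U}_\lambda(\sigma)\subseteq\mathcal{Q}(\mathbb{D},\sigma)$ into the unit coball of a Finsler metric by polar duality, to extract uniform equivalence of the resulting metrics from the nested inclusions of Theorem~\ref{thm:mainresultB}, and to deduce continuity of the family — in particular at $\lambda=0$ — from the interpolation property of the $\mathcal{U}_\lambda$. Throughout, write $\mathcal{T}_g=\mathcal{T}(\Gamma)$ and use the identification $T^*_\sigma\mathcal{T}_g\cong\mathcal{Q}(\mathbb{D},\sigma)$; since the quotient surface is compact every $\sigma$-invariant holomorphic quadratic differential is Poincaré-bounded, so the $L^\infty$ norm $\|\cdot\|_\infty$ is defined on each cotangent fiber, and the family of dual norms on the spaces $T_\sigma\mathcal{T}_g$ is by definition the Finsler metric $p_0$ dual to the unit $L^\infty$ balls. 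For $\lambda\in(0,1]$ the set $\mathcal{U}_\lambda(\sigma)$ furnished by Sections~\ref{sec:AlmostFuchsian}--\ref{sec:MainResult} is a convex, open, bounded neighbourhood of the origin in $T^*_\sigma\mathcal{T}_g$, and it is centrally symmetric: like the two sets between which it interpolates it is even invariant under $\phi\mapsto e^{i\theta}\phi$, because the induced metric and shape operator of a conformal minimal immersion — hence the conditions cutting $\mathcal{U}_\lambda(\sigma)$ out — depend on the Hopf differential $\phi$ only through $|\phi|^2$. Let $\|\cdot\|_{\mathcal{U}_\lambda(\sigma)}$ be its Minkowski gauge, a norm on $T^*_\sigma\mathcal{T}_g$, and let $p_\lambda$ at $\sigma$ be the dual norm on $T_\sigma\mathcal{T}_g$; as in the discussion preceding Theorem~\ref{intro:thm:convexityI}, ``Finsler metric'' is understood here as a continuous field of convex coballs. (Even without central symmetry one would still obtain a possibly asymmetric Finsler metric, in the sense in which Thurston's metric is one, and the argument below would be unchanged.)

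Next I would read off uniform equivalence. By the construction together with the estimates of Theorem~\ref{thm:mainresultB} there are numbers $0<r(\lambda)\le R(\lambda)$ with $R(\lambda)/r(\lambda)\le C$ for a constant $C$ independent of $\lambda$ and of $\sigma$, such that, writing $B_\infty(0,t)$ for the $L^\infty$-ball of radius $t$ in $\mathcal{Q}(\mathbb{D},\sigma)$,
\[
B_\infty(0,r(\lambda))\ \subseteq\ \mathcal{U}_\lambda(\sigma)\ \subseteq\ B_\infty(0,R(\lambda)).
\]
Rescaling to $\hat{\mathcal{U}}_\lambda(\sigma):=r(\lambda)^{-1}\mathcal{U}_\lambda(\sigma)$ gives $B_\infty(0,1)\subseteq\hat{\mathcal{U}}_\lambda(\sigma)\subseteq B_\infty(0,C)$, hence $C^{-1}\|\cdot\|_\infty\le\|\cdot\|_{\hat{\mathcal{U}}_\lambda(\sigma)}\le\|\cdot\|_\infty$ on $T^*_\sigma\mathcal{T}_g$, and dualising, $p_0\le p_\lambda\le C\,p_0$ on $T_\sigma\mathcal{T}_g$ with $p_\lambda$ now attached to $\hat{\mathcal{U}}_\lambda$, uniformly over $\sigma\in\mathcal{T}_g$. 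Consequently any two members of $(p_\lambda)_{\lambda\in[0,1]}$ are uniformly equivalent with constant $C^2$, where by convention $p_1$ is a fixed multiple of the Finsler metric dual to $\mathcal{Q}_\opaf(\mathbb{D},\sigma)$.

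It remains to make $(\lambda,v)\mapsto p_\lambda(v)$ continuous on $[0,1]\times T\mathcal{T}_g$. On $(0,1]\times T\mathcal{T}_g$ this follows, via the Hausdorff-Lipschitz continuity of polar duality on the class of convex bodies trapped between $B_\infty(0,1)$ and $B_\infty(0,C)$, once one knows that $(\lambda,\sigma)\mapsto\hat{\mathcal{U}}_\lambda(\sigma)$ is a continuous field of convex bodies over the cotangent bundle: continuity in $\lambda$ is the interpolation property of Sections~\ref{sec:AlmostFuchsian}--\ref{sec:MainResult}, and continuity in $\sigma$ amounts to the statement that the $\mathcal{U}_\lambda$-data form an open subset of the total space of $T^*\mathcal{T}_g$ on which the relevant geometric quantities vary continuously — which in turn reduces, as in those sections, to the continuous dependence of a conformal minimal immersion of $\mathbb{D}$ on its conformal type and Hopf differential. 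At $\lambda=0$ the same constructions give $\hat{\mathcal{U}}_\lambda(\sigma)\to B_\infty(0,1)$ in the Hausdorff distance as $\lambda\to0$, locally uniformly in $\sigma$; hence $\|\cdot\|_{\hat{\mathcal{U}}_\lambda(\sigma)}\to\|\cdot\|_\infty$ and $p_\lambda\to p_0$ locally uniformly, so the family is jointly continuous across $\lambda=0$ as well. This completes the proof.

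The real work is the continuity invoked in the third paragraph, namely assembling the fiberwise convexity and inclusion statements of Theorems~\ref{thm:mainresultA} and~\ref{thm:mainresultB} into a continuous, $\Mob$-natural family of convex bodies over all of $T^*\mathcal{T}_g$, with the uniformity in $\sigma$ that ``uniformly equivalent'' requires and with the Hausdorff degeneration to the $L^\infty$ ball as $\lambda\to0$. Once that is in hand, the passage to a uniformly equivalent continuous family of Finsler metrics is formal: polar duality carries a uniformly sandwiched family of symmetric convex bodies to a uniformly equivalent family of co-norms and is itself Lipschitz for the Hausdorff metric on that class of bodies.
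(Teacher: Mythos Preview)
Your proposal is correct and follows essentially the same route the paper sketches: the paper does not give a detailed proof of this theorem, merely observing that the rescaled family $(\mathcal{U}_\lambda)_{\lambda\in(0,1]}$ of convex neighbourhoods of the origin interpolates between the unit $L^\infty$ ball and $\mathcal{Q}_\opaf(\mathbb{D},\sigma)$, and invoking the sandwich estimate of Theorem~\ref{thm:mainresultB} for uniform equivalence. Your write-up supplies the details the paper omits --- the $S^1$-invariance giving central symmetry, the explicit rescaling, and the Hausdorff continuity at $\lambda=0$ --- and these are all in line with the paper's intended argument.
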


\noindent The authors thank Nicolas Tholozan for his insights and discussions on this topic.

\section{Preliminaries}\label{sec:GaussCodazzi}

We first recall some elementary results of Gauss-Codazzi theory that will be used in the sequel. Let $h$ denote the Poincar\'e metric of $\mathbb{D}$, and let $f:\Bbb{D}\rightarrow\Bbb{H}^3$ be a smooth immersion which is conformal in the sense that its induced metric $g$ satisfies
\begin{equation}\label{eqn:ConformalMetric}
g = e^{2u}h\ ,
\end{equation}
for some smooth function $u$, which we call its \emph{conformal factor}. Let $\kappa$ denote the curvature of $g$, and recall that
\begin{equation}\label{eqn:CurvatureEquationI}
\Delta^h u = -1 - e^{2u}\kappa\ ,
\end{equation}
or, equivalently
\begin{equation}\label{eqn:CurvaturaEquationII}
\Delta^g u = -\kappa - e^{-2u}\ .
\end{equation}

Let $\opII$ denote the second fundamental form of $f$, let $A$ denote its shape operator, and let $H:=\opTr(A)/2$ denote its mean curvature. Recall that the \emph{Hopf differential} $\phi$ of $f$ is defined to be the $(2,0)$-component of $\opII$ with respect to $g$. In other words, it is the symmetric bilinear form given by
\begin{equation}\label{eqn:HopfDifferential}
\phi(\xi,\nu):=\frac{1}{4}\bigg(\II(\xi,\nu)-i\II(J\xi,\nu)-i\II(\xi,J\nu)-\II(J\xi,J\nu)\bigg)\ ,
\end{equation}
where $J$ here denotes the complex structure of $\mathbb{D}$. In particular,
\begin{equation}\label{eqn:gNormOfHopf}
\left|\phi\right|_g^2 = H^2 - \opDet(A).
\end{equation}
By Gauss' equation,
\begin{equation}\label{eqn:GaussEquation}
\opDet(A) = \kappa+1\ .
\end{equation}
Combined with \eqref{eqn:ConformalMetric}, \eqref{eqn:CurvatureEquationI} and \eqref{eqn:gNormOfHopf}, this yields
\begin{equation}\label{eqn:PreFundamentalEquation}
\Delta^h u = -1 + e^{2u}(1-H^2) + e^{-2u}\left|\phi\right|_h^2\ .
\end{equation}
We call the triple $(u,H,\phi)$ the \emph{Gauss--Codazzi} data of the immersion $f$. We recall the fundamental theorem of surface theory, as it applies in this context.

\begin{theorem}[Fundamental theorem of surface theory]\label{thm:FundamentalTheoremOfSurfaces}
For any triple $(u,H,\phi)$ satisfying \eqref{eqn:PreFundamentalEquation}, there exists a conformal immersion $f:\mathbb{D}\rightarrow\mathbb{H}^3$ with this Gauss--Codazzi data. Furthermore, $f$ is unique up to post-composition by isometries of $\mathbb{H}^3$.
\end{theorem}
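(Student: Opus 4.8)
The plan is to derive the statement from the classical fundamental theorem of surface theory, carried out with ambient space $\mathbb{H}^3$ in place of Euclidean space, using throughout that $\mathbb{D}$ is simply connected. First I would make the prescribed data concrete: the triple $(u,H,\phi)$ determines the candidate first fundamental form $g=e^{2u}h$ and, since $\phi$ is to be the $(2,0)$-part of $\opII$ and $H$ half its $g$-trace, also the candidate second fundamental form $\opII$, equivalently the shape operator $A$ with $\opTr A=2H$. By \eqref{eqn:gNormOfHopf}, \eqref{eqn:GaussEquation} and \eqref{eqn:CurvatureEquationI}, all established above, the hypothesis \eqref{eqn:PreFundamentalEquation} is precisely the assertion that $(g,\opII)$ satisfies the Gauss equation $\opDet(A)=\kappa+1$ for a surface in an ambient $3$-manifold of constant sectional curvature $-1$. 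Alongside it one needs the Codazzi equation $(\nabla^g_X\opII)(Y,Z)=(\nabla^g_Y\opII)(X,Z)$, which in the present conformal notation relates $\delb\phi$ to $\partial H$ and so holds automatically whenever $H$ is constant and $\phi$ is holomorphic, in particular for the minimal immersions that are our concern, where $H\equiv 0$.

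Next I would pass to the orthonormal frame bundle. Identify $\Isom^+(\mathbb{H}^3)$ with the bundle of positively oriented orthonormal frames of $\mathbb{H}^3$ by fixing one such frame as a basepoint, and let $\theta$ denote the resulting Maurer--Cartan form, valued in the Lie algebra of $\Isom^+(\mathbb{H}^3)$. Fix a global $g$-orthonormal oriented frame $(e_1,e_2)$ of $T\mathbb{D}$, which exists because $\mathbb{D}$ is contractible. A conformal immersion $f$ with the prescribed data would lift to the map $\mathbf F:\mathbb{D}\to\Isom^+(\mathbb{H}^3)$ sending $z$ to the frame $\big(df(e_1),df(e_2),\nu\big)$ at $f(z)$, where $\nu$ is the positively oriented unit normal. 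Conversely, I would write down directly, from $(u,H,\phi)$, the Lie-algebra-valued $1$-form $\omega$ that $\mathbf F^{*}\theta$ should equal: its soldering components are the $g$-dual coframe of $(e_1,e_2)$ together with $0$ in the normal direction; its skew-symmetric tangential component is the Levi-Civita connection $1$-form of $g$ in the frame $(e_1,e_2)$; and its remaining components encode $A$, that is $H$ and $\phi$. The technical heart of the argument is then the verification that the Maurer--Cartan equation $d\omega+\half[\omega\wedge\omega]=0$ for this particular $\omega$ decomposes into the first structure equations, which hold identically since the tangential part is the torsion-free Levi-Civita connection of $g$ and $\opII$ is symmetric, together with exactly the Gauss equation \eqref{eqn:PreFundamentalEquation} and the Codazzi equation, both of which are in hand. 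Since $\mathbb{D}$ is simply connected, $\omega$ then integrates: there exists a map $\mathbf F:\mathbb{D}\to\Isom^+(\mathbb{H}^3)$ with $\mathbf F^{*}\theta=\omega$, unique up to left translation by a fixed element of $\Isom^+(\mathbb{H}^3)$.

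Finally I would set $f:=\pi\circ\mathbf F$, where $\pi:\Isom^+(\mathbb{H}^3)\to\mathbb{H}^3$ is the orbit map $\psi\mapsto\psi\cdot p_0$ of a fixed basepoint $p_0$. Unwinding the soldering and connection parts of $\omega$ shows that $f$ is an immersion with induced metric $g$, whose positively oriented unit normal is the last vector of $\mathbf F$, and whose shape operator is $A$; hence $f$ is conformal with conformal factor $u$, mean curvature $H$, and Hopf differential $\phi$, as required. For uniqueness, any conformal immersion carrying this Gauss--Codazzi data produces, after the same choice of $(e_1,e_2)$, such a lift $\mathbf F$; two such lifts differ by left translation by a fixed isometry of $\mathbb{H}^3$, so the two immersions differ by post-composition with that isometry. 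The points that need genuine care are the bookkeeping of the conformal factors $e^{u}$ relating $h$- and $g$-orthonormal frames and the sign conventions in \eqref{eqn:HopfDifferential} and in $\opII$; the one substantive computation --- that integrability amounts to Gauss together with Codazzi --- has, on the Gauss side, already been carried out in the derivation of \eqref{eqn:PreFundamentalEquation} above.
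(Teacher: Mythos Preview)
The paper does not actually prove this theorem: it is stated there only as a classical fact being recalled (``We recall the fundamental theorem of surface theory, as it applies in this context''), with no argument supplied. There is therefore nothing in the paper to compare your proposal against.

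That said, your sketch is the standard and correct route: reconstruct $g=e^{2u}h$ and $\opII$ from the data, write down the corresponding $\mathfrak{isom}(\mathbb{H}^3)$-valued $1$-form $\omega$ in a global orthonormal frame, verify that the Maurer--Cartan equation $d\omega+\tfrac12[\omega\wedge\omega]=0$ reduces to Gauss and Codazzi, and integrate using simple connectivity of $\mathbb{D}$. Uniqueness up to left translation gives uniqueness of $f$ up to post-composition by isometries.

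You are also right to flag that, as literally stated, the theorem only imposes \eqref{eqn:PreFundamentalEquation}, which is the Gauss equation alone; the Codazzi equation is an independent integrability condition and is needed as well. Your observation that Codazzi is automatic in the case of interest (minimal, so $H\equiv 0$, with $\phi$ holomorphic) is exactly what the paper relies on implicitly in the sequel, and is the honest way to close this small gap in the statement.
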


Finally, we will make use of the following result of Li--Mochizuki \cite{LM20}.
\begin{theorem}[Sub-supersolution method]\label{subsup}
Let $(\Sigma,g)$ be a riemannian surface, let $F:\Sigma\times\Bbb{R}\rightarrow\Bbb{R}$ be a smooth function, and consider the equation
\begin{equation}\label{eqn:LiMochizuki}
\Delta^g u= F(x,u).
\end{equation}
If there exist $C^2$ functions $u_-,u_+:\Sigma\rightarrow\R$ satisfying
\begin{equation}
u_-<u_+\ ,\Delta^g u_-\geq F(x,u_-)\ ,\ \text{and}\ \Delta^g u_+\leq F(x,u_+)\ ,
\end{equation}
then \eqref{eqn:LiMochizuki} admits at least one solution $u$ satisfying $u_-\leq u\leq u_+$.
\end{theorem}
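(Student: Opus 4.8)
The plan is to prove this by the classical method of monotone iteration (a form of Perron's method), combined with an exhaustion of $\Sigma$ by relatively compact domains to accommodate the possible non-compactness of $\Sigma$.

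\emph{Step 1 (exhaustion and the monotonicity shift).} Choose a smooth proper exhaustion function on $\Sigma$ and let $\Omega_1\subset\Omega_2\subset\cdots$ be an increasing sequence of relatively compact open subsets with smooth boundary, $\overline{\Omega_n}\subset\Omega_{n+1}$ and $\bigcup_n\Omega_n=\Sigma$. For each $n$, the set $K_n:=\{(x,t):x\in\overline{\Omega_n},\ u_-(x)\le t\le u_+(x)\}$ is compact, so since $F$ is smooth there is a constant $\lambda_n>0$ with $\partial_tF\le\lambda_n$ on $K_n$; equivalently, $t\mapsto\lambda_nt-F(x,t)$ is non-decreasing on $[u_-(x),u_+(x)]$ for every $x\in\overline{\Omega_n}$.

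\emph{Step 2 (monotone iteration on $\Omega_n$).} Fix $n$, set $v_0:=u_-$, and inductively let $v_{k+1}$ be the unique solution of the linear Dirichlet problem
\begin{equation*}
(\lambda_n-\Delta^g)v_{k+1}=\lambda_nv_k-F(x,v_k)\ \text{ in }\Omega_n,\qquad v_{k+1}=u_-\ \text{ on }\partial\Omega_n,
\end{equation*}
which is solvable because $\lambda_n-\Delta^g$ is a coercive elliptic operator on the bounded domain $\Omega_n$. Using that $u_-$ (resp.\ $u_+$) is a sub- (resp.\ super-) solution of \eqref{eqn:LiMochizuki}, that $v_0=u_-<u_+$ on $\overline{\Omega_n}$, the monotonicity from Step 1, and the weak maximum principle for $\lambda_n-\Delta^g$, an induction gives
\begin{equation*}
u_-=v_0\le v_1\le\cdots\le v_k\le v_{k+1}\le\cdots\le u_+\quad\text{on }\overline{\Omega_n}.
\end{equation*}
Hence $v_k$ increases pointwise to some $u_n$ with $u_-\le u_n\le u_+$. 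Since the right-hand sides $\lambda_nv_k-F(x,v_k)$ are uniformly bounded on $\overline{\Omega_n}$, interior $L^p$ and Schauder estimates bound the $v_k$ in $C^{2,\alpha}$ on compact subsets of $\Omega_n$; by Arzel\`a--Ascoli, $v_k\to u_n$ in $C^2_{\oploc}(\Omega_n)$, and passing to the limit in the equation shows that $u_n\in C^2(\Omega_n)$ solves $\Delta^gu_n=F(x,u_n)$ in $\Omega_n$.

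\emph{Step 3 (diagonal limit).} Fix a compact $K\subset\Sigma$ and a relatively compact open $U$ with $K\subset U$. For all $n$ large, $\overline U\subset\Omega_n$, and on $\overline U$ the $u_n$ satisfy the uniform bound $u_-\le u_n\le u_+$; since $F$ is smooth, $F(\cdot,u_n)$ is bounded in $C^0(\overline U)$ independently of $n$, so interior Schauder estimates for $\Delta^gu_n=F(x,u_n)$ bound $u_n$ in $C^{2,\alpha}(K)$ independently of $n$. Applying this along an exhausting sequence of compacta and extracting a diagonal subsequence yields $u_{n_j}\to u$ in $C^2_{\oploc}(\Sigma)$ with $u_-\le u\le u_+$; passing to the limit, $u$ solves $\Delta^gu=F(x,u)$ on all of $\Sigma$, which is the desired solution. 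The substantive point is the uniformity of the elliptic estimates in Step 3: one must check that the interior Schauder constants on a fixed compact set do not deteriorate as $n\to\infty$. This holds because they depend only on the geometry of $(\Sigma,g)$ on a fixed relatively compact neighbourhood and on the $n$-independent bound $u_-\le u_n\le u_+$. (When $\Sigma$ is compact, the exhaustion is unnecessary and the argument collapses to a single monotone iteration on $\Sigma$ itself.)
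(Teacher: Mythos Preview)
The paper does not prove this theorem: it is quoted as a known result from Li--Mochizuki \cite{LM20}, so there is no proof in the paper to compare against. Your argument is the standard monotone-iteration proof and is essentially correct.

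One small point of care in Step~3: to invoke interior Schauder estimates for $\Delta^g u_n=F(x,u_n)$ you need the right-hand side in $C^{0,\alpha}$, not merely $C^0$. The usual bootstrap (which you already invoke in Step~2) fixes this: from the uniform $C^0$ bound $u_-\le u_n\le u_+$ one first applies interior $L^p$ estimates to get $u_n\in W^{2,p}\hookrightarrow C^{1,\alpha}$ uniformly on compacta, whence $F(\cdot,u_n)$ is uniformly $C^{0,\alpha}$ (since $F$ is smooth), and then Schauder gives the uniform $C^{2,\alpha}$ bound used in the diagonal extraction. With this clarification your proof is complete.
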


\noindent We will also require the following version of the Omori-Yau maximum principle (see \cite{Omo67, Yau75, LM20}).

\begin{theorem}[Omori-Yau maximum principle]\label{thm:OmoriYau}
Let $(M,g)$ be a complete riemannian manifold with Ricci curvature bounded below, and let $u:M\rightarrow\R$ be a $C^2$ function bounded above. For all $\eps>0$, there exists $x\in M$ such that
\begin{equation}
u(x)\geq \sup_{x\in M} u(x)-\eps\ ,\ |(\nabla^g u)(x)|\leq\eps\ ,\ \text{and}\ (\Delta^g u)(x)\leq\eps\ .
\end{equation}
\end{theorem}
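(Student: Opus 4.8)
The plan is to carry out the classical argument of Omori and Yau, combining the Laplacian comparison theorem with first- and second-derivative tests applied to a penalized version of $u$.

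The first step is to extract the analytic consequence of the curvature hypothesis. Write $n=\dim M$, fix $\Lambda\geq 0$ with $\Ric\geq-(n-1)\Lambda^2 g$, fix a basepoint $o\in M$ to be specified below, and set $r:=d(o,\cdot)$; by completeness and the Hopf--Rinow theorem, $r$ is a proper function and minimizing geodesics from $o$ exist. The Laplacian comparison theorem then gives $\Delta r\leq(n-1)\Lambda\coth(\Lambda r)$ away from the cut locus of $o$ and in the barrier sense everywhere (read as $\Delta r\leq(n-1)/r$ when $\Lambda=0$). From this I would build a continuous exhaustion function with controlled derivatives by setting $\gamma:=\log(1+r^2)$: it is smooth near $o$ and satisfies $\gamma\geq 0$, $\gamma(o)=0$, $\gamma\to+\infty$ at infinity, $|\nabla\gamma|\leq 1$ wherever it is differentiable, and $\Delta\gamma\leq C$ in the barrier sense for a constant $C=C(n,\Lambda)$ — the denominator $1+r^2$ is exactly what absorbs the at-most-linear growth of $r\,\Delta r$.

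Now fix $\eps>0$. Since $\sup_M u<\infty$, take $o$ to be a point with $u(o)>\sup_M u-\eps/3$, so that $\gamma(o)=0$. For $\delta>0$ consider $v_\delta:=u-\delta\gamma$. As $u$ is bounded above and $\gamma$ is a proper exhaustion, $v_\delta\to-\infty$ at infinity while $v_\delta(o)=u(o)$ is finite, so $v_\delta$ attains its global maximum on a compact set, at a point $x_\delta$; since $\gamma\geq 0$ we moreover get $u(x_\delta)\geq v_\delta(x_\delta)\geq v_\delta(o)>\sup_M u-\eps/3$. Applying the first- and second-derivative tests to $v_\delta$ at $x_\delta$ — using Calabi's trick to replace $\gamma$ near $x_\delta$ by a smooth upper barrier satisfying the same estimates, in order to cover the case where $x_\delta$ lies in the cut locus of $o$ — yields $\nabla u(x_\delta)=\delta\nabla\gamma(x_\delta)$ and $\Delta u(x_\delta)\leq\delta\,\Delta\gamma(x_\delta)$, hence $|\nabla u(x_\delta)|\leq\delta$ and $\Delta u(x_\delta)\leq C\delta$. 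Choosing $\delta\leq\eps/\max(1,C)$ and setting $x:=x_\delta$ then produces all three required inequalities at once.

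The computations involved — the chain-rule estimates for $\gamma$ and the check that $v_\delta$ attains its maximum — are routine. The one genuinely delicate point, and the step I expect to be the main obstacle, is the non-smoothness of the distance function along the cut locus of $o$: this forces the argument to be run in the barrier sense, or equivalently with Calabi's device of shifting the basepoint a short distance along a minimizing geodesic to $x_\delta$ to obtain a smooth local upper barrier for $r$ to which the ordinary maximum principle applies. Completeness of $(M,g)$ is used precisely here — to guarantee those minimizing geodesics and the properness of $r$ — and the Ricci lower bound enters only through the resulting comparison estimate for $\Delta\gamma$.
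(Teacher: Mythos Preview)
The paper does not prove this theorem; it merely states it and cites \cite{Omo67, Yau75, LM20} for the proof. Your proposal correctly reproduces the classical Omori--Yau argument from those references: build an exhaustion function $\gamma=\log(1+r^2)$ with $|\nabla\gamma|\leq 1$ and $\Delta\gamma\leq C$ via Laplacian comparison, penalize $u$ by $\delta\gamma$ to force a maximum, and apply Calabi's barrier trick at cut points. The argument is sound and is exactly the one the cited sources carry out, so there is nothing to compare against in the paper itself.
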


\section{Almost fuchsian conformal minimal immersions}\label{sec:AlmostFuchsian}

We will be concerned with conformal immersions that are minimal in the sense that their mean curvature $H$ vanishes. In this case, the Hopf differential $\phi$ is known to be holomorphic (see [Hop50],\textsection 2), and \eqref{eqn:PreFundamentalEquation} clearly reduces to the fundamental identity
\begin{equation}\label{eqn:FundamentalIdentity}
\Delta^h u = -1 + e^{2u} + e^{-2u}\left|\phi\right|^2_h\ .
\end{equation}
It is this equation that will be the object of our study.

For all $(k,\alpha)$, and for every $k$-times differentiable function $f:\Bbb{D}\rightarrow\Bbb{R}$, let $\|f\|_{C^{k,\alpha}}$ denote the $C^{k,\alpha}$-norm of $f$ with respect to the Poincar\'e metric, and let $C^{k,\alpha}(\Bbb{D})$ denote the Banach space of $k$-times differentiable functions over $\Bbb{D}$ with bounded $C^{k,\alpha}$-norm. Let $\mathcal{Q}_\opbdd(\Bbb{D})$ denote the space of bounded holomorphic quadratic differentials over $\Bbb{D}$ furnished with the $C^0$ norm, and note that this is also a Banach space. Consider the function
\begin{equation}\label{eqn:FundamentalFunctional}
\mathcal{F}:C^{2,\alpha}(\Bbb{D})\times\mathcal{Q}_\opbdd(\Bbb{D})\rightarrow C^{0,\alpha}(\Bbb{D})\ ;(u,\phi)\mapsto \Delta^h u + 1 - e^{2u} - e^{-2u}\left|\phi\right|_h^2.
\end{equation}
Note that, since this function is constructed as a finite combination of differentiations and compositions by smooth functions, it defines a smooth function between Banach spaces.

Let $\mathcal{Z}:=\mathcal{F}^{-1}$ denote the solution space of $\mathcal{F}$, and let $\pi:\mathcal{Z}\rightarrow\mathcal{Q}_\opbdd(\mathbb{D})$ denote projection onto the second factor. The following result tells us that $\mathcal{Z}$ identifies with the space of complete conformal minimal immersions of $\mathbb{D}$ in $\mathbb{H}^3$ up to post-composition by isometries.

\begin{lemma}\label{ZIsConformalMinimalImmersions}
$\mathcal{Z}$ is the set of all Gauss--Codazzi data of complete conformal minimal immersions.
\end{lemma}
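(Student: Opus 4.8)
The plan is to prove a set equality, so I would argue the two inclusions separately. For the inclusion $\mathcal{Z}\subseteq\{\text{Gauss--Codazzi data of complete conformal minimal immersions}\}$, let $(u,\phi)\in\mathcal{Z}$. By definition of $\mathcal{F}$, the triple $(u,0,\phi)$ satisfies \eqref{eqn:PreFundamentalEquation} with $H=0$, i.e.\ it satisfies the fundamental identity \eqref{eqn:FundamentalIdentity}. Hence by the fundamental theorem of surface theory (Theorem \ref{thm:FundamentalTheoremOfSurfaces}) there exists a conformal immersion $f:\mathbb{D}\rightarrow\mathbb{H}^3$ with Gauss--Codazzi data $(u,0,\phi)$, unique up to post-composition by isometries. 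Since $H=0$, this $f$ is minimal. It remains to check that $f$ is complete; this is where boundedness of $\phi$ enters. The induced metric is $g=e^{2u}h$, so completeness follows once we show $u$ is bounded below, which in turn will follow from a maximum-principle argument: if $u$ were very negative somewhere, \eqref{eqn:FundamentalIdentity} would force $\Delta^h u$ to be very negative there too (since $e^{-2u}|\phi|_h^2\le \|\phi\|_{C^0}^2\,e^{-2u}$ grows, but $-e^{2u}$ and $-1$ are bounded above), contradicting the existence of an interior minimum or, more precisely, applying Theorem \ref{thm:OmoriYau} to $-u$ on the complete manifold $(\mathbb{D},h)$ — note $(\mathbb{D},h)$ has constant Ricci curvature, so the hypotheses hold. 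Actually one must first know $-u$ is bounded above; I would instead directly construct a sub/supersolution barrier using Theorem \ref{subsup} to pin $u$ between explicit bounds depending only on $\|\phi\|_{C^0}$, which simultaneously gives the a priori estimate. In fact, since $(u,\phi)$ is \emph{given} to solve the equation with $u\in C^{2,\alpha}(\mathbb{D})$, $u$ is already bounded (that is built into the Banach space $C^{2,\alpha}(\mathbb{D})$), so completeness of $g=e^{2u}h$ is immediate from completeness of $h$ together with the two-sided bound on $u$.

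For the reverse inclusion, let $f:\mathbb{D}\rightarrow\mathbb{H}^3$ be a complete conformal minimal immersion with Gauss--Codazzi data $(u,H,\phi)$. Minimality gives $H\equiv 0$, and then $\phi$ is holomorphic (as recalled from [Hop50]); moreover \eqref{eqn:PreFundamentalEquation} reduces to \eqref{eqn:FundamentalIdentity}, which says precisely $\mathcal{F}(u,\phi)=0$. The content of the inclusion is therefore the \emph{regularity/boundedness} claim: that $u\in C^{2,\alpha}(\mathbb{D})$ and $\phi\in\mathcal{Q}_\opbdd(\mathbb{D})$, so that $(u,\phi)$ genuinely lies in the domain of $\mathcal{F}$ and hence in $\mathcal{Z}$. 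Boundedness of $u$ is where I expect the real work: I would use completeness of $g$ to run the Omori--Yau maximum principle (Theorem \ref{thm:OmoriYau}) on $(\mathbb{D},g)$ — whose Ricci curvature equals its Gauss curvature $\kappa=\opDet(A)-1\ge -1$, bounded below — applied to $u$ and to $-u$ in turn, using \eqref{eqn:CurvaturaEquationII} rewritten via \eqref{eqn:FundamentalIdentity}; alternatively, and more cleanly, I would appeal to the standard Cheng--Yau/Schoen-type bound that a complete minimal surface in $\mathbb{H}^3$ with bounded-below curvature has $|\phi|_g\le 1$ and controlled conformal factor. Concretely: $|\phi|_g^2 = -\opDet(A) = -(\kappa+1)$ by \eqref{eqn:gNormOfHopf} and \eqref{eqn:GaussEquation} with $H=0$, and $|\phi|_h^2 = e^{4u}|\phi|_g^2$, so controlling $|\phi|_h$ is equivalent to controlling $e^{4u}$ together with $\kappa$. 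Once $u$ is bounded above and below, elliptic Schauder estimates applied to \eqref{eqn:FundamentalIdentity} (whose right-hand side is then bounded in $C^{0,\alpha}$) upgrade $u$ to $C^{2,\alpha}(\mathbb{D})$, and the bound on $e^{4u}|\phi|_g^2 = -e^{4u}(\kappa+1)\le e^{4u}$ then gives $\phi\in\mathcal{Q}_\opbdd(\mathbb{D})$; conversely boundedness of $|\phi|_h$ feeds back to bound $u$, so the two estimates should be extracted together via a single maximum-principle argument on the quantity $u$ (or $|\phi|_h e^{-2u}$).

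The main obstacle, then, is the a priori bound on the conformal factor $u$ — equivalently, showing that a complete conformal minimal immersion automatically has bounded Hopf differential and two-sided-bounded conformal factor, and conversely that a bounded solution of \eqref{eqn:FundamentalIdentity} yields a complete immersion. The ``$\Leftarrow$'' direction of that equivalence is easy ($g=e^{2u}h$ with $u$ bounded and $h$ complete $\Rightarrow$ $g$ complete). The ``$\Rightarrow$'' direction is the crux: I would exploit that completeness plus the Gauss equation $\kappa = \opDet(A)-1$ forces $\kappa\ge -1$ (so Ricci is bounded below and Omori--Yau applies on $(\mathbb{D},g)$), then test \eqref{eqn:CurvaturaEquationII} — i.e.\ $\Delta^g u = -\kappa - e^{-2u}$ — at near-maxima and near-minima of $u$ to trap $u$ between constants depending only on a uniform bound for $\kappa$ coming from the minimal surface equation, and simultaneously read off $|\phi|_g\le 1$. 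Everything else — holomorphicity of $\phi$, the reduction of \eqref{eqn:PreFundamentalEquation} to \eqref{eqn:FundamentalIdentity}, the Schauder bootstrap to $C^{2,\alpha}$, and the invocation of Theorem \ref{thm:FundamentalTheoremOfSurfaces} for existence and uniqueness — is routine once that estimate is in hand.
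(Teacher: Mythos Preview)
Your treatment of the easy inclusion $\mathcal{Z}\Rightarrow\{\text{complete}\}$ is fine and matches the paper: since $u\in C^{2,\alpha}(\mathbb{D})$ is by definition bounded, $g=e^{2u}h$ is uniformly equivalent to $h$ and hence complete, and Theorem~\ref{thm:FundamentalTheoremOfSurfaces} produces the immersion.

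The reverse inclusion, however, contains a genuine error. You propose to run Omori--Yau on $(\mathbb{D},g)$, asserting that its Ricci (=Gauss) curvature satisfies $\kappa=\opDet(A)-1\ge -1$. The inequality goes the other way: for a minimal surface the principal curvatures are $\pm\lambda$, so $\opDet(A)=-\lambda^2\le 0$ and hence $\kappa=-1-|\phi|_g^2\le -1$. There is no a priori \emph{lower} bound on $\kappa$, so the hypothesis of Theorem~\ref{thm:OmoriYau} on $(\mathbb{D},g)$ is not available, and your bootstrap (bounding $u$ and $|\phi|_g$ simultaneously via Omori--Yau) does not get off the ground. The same sign issue undermines your remark that $|\phi|_g\le 1$ should fall out automatically; that bound is precisely the almost fuchsian condition, not a consequence of completeness alone.

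The paper sidesteps this obstruction by an indirect argument that does not require any curvature lower bound. Working on the complete surface $(\mathbb{D},g)$, it applies the sub/supersolution method (Theorem~\ref{subsup}) to the prescribed-curvature equation $\Delta^g v=-\kappa-e^{-2v}$, obtaining a solution $v$ with $0\le v\le\opLn(2)/2$. The metric $e^{2v}g$ is then complete with constant curvature $-1$, so by uniqueness of the Poincar\'e metric on $\mathbb{D}$ it must equal $h$; comparing with $h=e^{-2u}g$ forces $u=-v$, whence $u$ is bounded. Elliptic regularity then gives $u\in C^{2,\alpha}(\mathbb{D})$, and boundedness of $|\phi|_h$ follows from \eqref{eqn:FundamentalIdentity} once $u$ and $\Delta^h u$ are bounded. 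The key idea you are missing is this appeal to the \emph{uniqueness} of the hyperbolic metric, which converts an existence statement (produced by sub/supersolution, needing no Ricci hypothesis) into the desired a priori bound on $u$.
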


\begin{proof}
Indeed, let $f:\mathbb{D}\rightarrow\mathbb{H}$ be a conformal minimal immersion with conformal factor $u$, induced metric $g$, and Hopf differential $\phi$. Note that $g$ is complete, and let $\kappa$ denote its curvature. Consider now the equation
\begin{equation*}
\Delta^g v = -\kappa - e^{-2v}.
\end{equation*}
Applying Theorem \ref{subsup} with subsolution $v_-:=0$ and supersolution $v_+:=\opLn(2)/2$ yields a smooth solution $v$ of this equation satisfying
\begin{equation*}
0\leq v\leq\frac{\opLn(2)}{2}.
\end{equation*}
Since $g$ is complete, and since $v$ is bounded, $e^{2v}g$ is also complete. Since this metric has constant sectional curvature equal to $-1$, it follows by uniqueness of the Poincar\'e metric over $\mathbb{D}$ that
\begin{equation*}
e^{2v}g = h = e^{-2u}g,
\end{equation*}
so that $u=-v$ is also bounded. It now follows by quasi-linearity of \eqref{eqn:CurvatureEquationI} and the classical theory of elliptic operators that $u\in C^{2,\alpha}(\mathbb{D})$, so that $(u,\phi)\in\mathcal{Z}$, as desired. Conversely, if $(u,\phi)\in\mathcal{Z}$, then, by the fundamental theorem of surface theory, there exists a conformal minimal immersion $f:\mathbb{D}\rightarrow\mathbb{H}^3$ with this Gauss--Codazzi data. Since $u$ is bounded, this immersion is complete, and this completes the proof.
\end{proof}

The study of existence and uniqueness of solutions of $\mathcal{F}$ is equivalent to the study of the geometry of the pair $(\mathcal{Z},\pi)$. Indeed, the standard PDE problems of determining a priori estimates, existence and uniqueness correspond to the respective geometric problems of verifying properness, surjectivity and injectivity. In what follows, we study the extent to which $\mathcal{Z}$ is a graph over the image of $\pi$. To this end, we introduce, for all $\lambda>0$, the subsets $\Omega_\lambda\subseteq C^{2,\alpha}(\mathbb{D})\times\mathcal{Q}_\opbdd(\mathbb{D})$ and $\mathcal{U}_\lambda\subseteq\mathcal{Q}_\opbdd(\mathbb{D})$ given by
\begin{equation}\label{eqn:OmegaLambda}
\eqalign{
\Omega_\lambda&:=\big\{(u,\phi)\ \big|\ \sup_{z\in\mathbb{D}}e^{-4u(z)}\left|\phi(z)\right|^2_h<\lambda^2\big\}\ ,\ \text{and}\cr
\mathcal{U}_\lambda&:=\pi(\mathcal{Z}\cap\Omega_\lambda)\ .\vphantom{\big(}\cr}
\end{equation}

The sets $\Omega_1$ and $\mathcal{U}_1$ are closely related to the space of almost fuchsian embeddings. We first extend the definition given in the introduction (c.f. \cite{KS07,Sep16}). We say that a conformal minimal immersion $f:\mathbb{D}\rightarrow\mathbb{H}^3$, with conformal factor $u$, induced metric $g$, shape operator $A$, and Hopf differential $\phi$, is \emph{almost fuchsian} whenever
\begin{enumerate}
\item $f$ is properly embedded,
\item $\partial_\infty f(\mathbb{D})$ is a Jordan curve in $\partial_\infty\mathbb{H}^3$, and
\item $\sup_{z\in\mathbb{D}}\|A(z)\|_g < 1$.
\end{enumerate}
We call $\partial_\infty f(\mathbb{D})$ the \emph{boundary curve} of $f$. Note that, by \eqref{eqn:ConformalMetric} and \eqref{eqn:gNormOfHopf},
\begin{equation}\label{eqn:BoundedCurvatureEquivalences}
\sup_{z\in\Bbb{D}}e^{-2u(z)}\left|\phi(z)\right|_h = \sup_{z\in\Bbb{D}}\left|\phi(z)\right|_g = \sup_{z\in\Bbb{D}}\|A(z)\|\ ,
\end{equation}
so that the third condition given above has various equivalent formulations.

\begin{theorem}
$\mathcal{Z}\cap\Omega_1$ is the set of all Gauss--Codazzi data of almost fuchsian conformal minimal embeddings. Furthermore, if $f$ is an almost fuchsian conformal minimal embedding with Gauss--Codazzi data $(u,\phi)$, then its boundary curve is a $K$-quasicircle, where
\begin{equation}
K\leq\sup_{z\in\Bbb{D}}\frac{1+\left|\phi\right|_g}{1-\left|\phi\right|_g}\ .
\end{equation}
\end{theorem}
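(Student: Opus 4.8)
The statement has two parts. First, we must show that $\mathcal{Z}\cap\Omega_1$ is precisely the set of Gauss--Codazzi data of almost fuchsian conformal minimal embeddings; second, we must bound the quasiconformal constant of the boundary curve. By Lemma \ref{ZIsConformalMinimalImmersions}, $\mathcal{Z}$ already consists of the Gauss--Codazzi data of complete conformal minimal immersions, and by \eqref{eqn:BoundedCurvatureEquivalences}, the defining condition $\sup_z e^{-4u}|\phi|_h^2<1$ of $\Omega_1$ is exactly $\sup_z\|A\|_g<1$, i.e. condition (3) in the definition of almost fuchsian. So the real content of the first part is to show that condition (3) \emph{forces} conditions (1) and (2): a complete conformal minimal immersion with $\sup_z\|A\|_g<1$ is automatically properly embedded and has a Jordan curve as its ideal boundary.

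For this I would follow the classical argument going back to Uhlenbeck, and reproduced in \cite{KS07,Sep16}. The key device is the normal exponential map (the equidistant flow) $\Phi:\mathbb{D}\times\mathbb{R}\to\mathbb{H}^3$, $\Phi(z,t)=\exp_{f(z)}(t\,n(z))$, where $n$ is a unit normal. When $\|A\|_g<1$ pointwise, the Jacobi-field computation shows that the metric on each equidistant surface $f_t:=\Phi(\cdot,t)$ stays nondegenerate for all $t$ — concretely, the shape operator evolves so that the principal curvatures of $f_t$ are $\tanh$-type expressions in $t$ and the principal curvatures $\pm|\phi|_g$ of $f$, and the condition $|\phi|_g<1$ keeps the "$1-(\text{curvature})\tanh(t)$" factors positive. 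Hence $\Phi$ is an immersion; completeness of $g$ together with a uniform bound $\sup_z\|A\|_g\le k<1$ upgrades this to $\Phi$ being a diffeomorphism onto $\mathbb{H}^3$ (a standard Cartan--Hadamard–type covering argument, using that the normal flow lines are complete geodesics that fill $\mathbb{H}^3$ exactly once). In particular $f=f_0$ is an embedding, it is proper because the tubular neighbourhoods it bounds exhaust $\mathbb{H}^3$, and its ideal boundary is the set of endpoints of the two families of normal geodesics, which depends continuously and injectively on $z\in\partial_\infty\mathbb{D}\cong S^1$, hence is a Jordan curve. Conversely, any almost fuchsian embedding is in particular a complete conformal minimal immersion satisfying $\Omega_1$, so it lies in $\mathcal{Z}\cap\Omega_1$.

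For the quasicircle bound, the cleanest route is again through the equidistant foliation. The boundary curve $\partial_\infty f(\mathbb{D})$ is the common ideal boundary of the two "horospherical-type" retractions coming from the $t\to\pm\infty$ ends of $\Phi$; equivalently, $\Phi$ conjugates the round sphere at infinity to a foliation whose leaf space gives a quasiconformal reparametrisation of $\hat{\mathbb{C}}$ whose dilatation at the image of $z$ is controlled by the ratio of the extremal Jacobi-field stretches at $z$, namely by $\frac{1+|\phi(z)|_g}{1-|\phi(z)|_g}$. Taking the supremum over $z$ gives the stated $K$. Alternatively, one can cite the Krat–Sullivan/Epstein-type estimate (as in \cite{KS07, Sep16}) that the Gauss map of a minimal surface with principal curvatures in $(-k,k)$ extends to a $\frac{1+k}{1-k}$-quasiconformal map on $\hat{\mathbb{C}}$, applied pointwise.

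The main obstacle is the global step: upgrading "$\Phi$ is a local diffeomorphism" to "$\Phi$ is a global diffeomorphism onto $\mathbb{H}^3$." Pointwise nondegeneracy is a one-line Jacobi computation, but embeddedness and properness require a genuinely global argument — one must rule out the normal geodesics from two different points of $\mathbb{D}$ meeting, and must show they sweep out all of $\mathbb{H}^3$. Here the uniform bound $\sup_z\|A\|_g\le k<1$ (not merely the pointwise bound) is essential: it gives a uniform lower bound on the width of the embedded tubular neighbourhood, which feeds into a completeness/monodromy argument showing $\Phi$ is a covering map, and since $\mathbb{H}^3$ is simply connected, a diffeomorphism. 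I would either reproduce this argument in detail or, more economically, cite \cite{Uhl83,KS07,Sep16} where it is carried out, since it is by now standard in the almost-fuchsian literature, and devote the new exposition to the parts genuinely needed later.
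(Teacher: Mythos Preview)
Your proposal is correct and follows essentially the same route as the paper: the normal exponential map $\Phi$ (the paper's $\tilde f$), the explicit pulled-back metric showing nondegeneracy when $\|A\|_g<1$, completeness of that metric to make $\Phi$ a local isometry with the path-lifting property and hence a diffeomorphism onto the simply-connected $\mathbb{H}^3$, and finally a quasiconformal comparison with a totally geodesic plane for the boundary estimate.

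One small imprecision worth tightening: your sentence ``its ideal boundary is the set of endpoints of the two families of normal geodesics, which depends continuously and injectively on $z\in\partial_\infty\mathbb{D}$'' is garbled --- the normal-geodesic endpoints give two Gauss maps defined on $\mathbb{D}$, not on $\partial_\infty\mathbb{D}$, and their images are the two complementary open disks in $\partial_\infty\mathbb{H}^3$, not the boundary curve itself. The paper's cleaner way (which is really your ``alternative'' route) is to compare $\tilde f$ with the normal exponential $\tilde f'$ of a totally geodesic plane: the composite $\tilde f'\circ\tilde f^{-1}$ is bilipschitz on $\mathbb{H}^3$, hence by Gehring--Mostow extends to a quasiconformal self-map of $\partial_\infty\mathbb{H}^3$ carrying the equator to $\partial_\infty f(\mathbb{D})$, which is therefore a quasicircle; the slice-by-slice dilatation of the metrics on $\mathbb{D}\times\{t\}$ is exactly $\frac{1+|\phi|_g}{1-|\phi|_g}$, and letting $t\to\infty$ gives the stated $K$.
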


\begin{remark}
This result follows readily form the original work \cite{Uhl83} of Uhlenbeck as well as the argument provided in the proof of Theorem $1.3$ of \cite{HW13}. We provide here the details for completeness.
\end{remark}

\begin{proof}
By \eqref{eqn:BoundedCurvatureEquivalences} and Lemma \ref{ZIsConformalMinimalImmersions}, the Gauss--Codazzi data of any almost fuchsian conformal minimal embedding is an element of $\mathcal{Z}\cap\Omega_1$. We now prove the converse. Choose $(u,\phi)\in\mathcal{Z}\cap\Omega_1$, and let $f:\mathbb{D}\rightarrow\mathbb{H}^3$ be a conformal minimal immersion given by the fundamental theorem of surface theory. We will show that $f$ is almost fuchsian. Indeed, let $g:=e^{2u}h$ denote its induced metric, let $\nu$ denote its unit normal vector field, let $A$ denote its shape operator, and note that, by \eqref{eqn:BoundedCurvatureEquivalences},
\begin{equation}\label{ProofShapeOperatorBounded}
\sup_{z\in\Bbb{D}}\|A(z)\|_g < 1\ .
\end{equation}
Now define $\tilde{f}:\mathbb{D}\times\mathbb{R}\rightarrow\mathbb{H}^3$ by
\begin{equation*}
\tilde{f}(x,t):=\opExp_{f(x)}(t\nu(x))\ .
\end{equation*}
where $\opExp$ here denotes the exponential map of $\H^3$. As in Lemma $2.1$ of \cite{HW13}, we see that the metric induced by $\tilde{f}$ over $\mathbb{D}\times\mathbb{R}$ is
\begin{equation*}
\tilde{g}(x,t):=g_t(x)\oplus dt^2\ ,
\end{equation*}
where, for all $t$,
\begin{equation*}
g_t(\cdot,\cdot)=g((\cosh(t)I_0+\sinh(t)A)\cdot,(\cosh(t)I_0+\sinh(t)A)\cdot)\ .
\end{equation*}
By \eqref{ProofShapeOperatorBounded}, $\tilde{g}$ is everywhere non-degenerate and, since $g$ is complete, so too is $\tilde{g}$. In particular, since $\tilde{f}$ is a local isometry with respect to this metric, it satisfies the path-lifting property. Since $\mathbb{H}^3$ is simply-connected, it follows that $\tilde{f}$ is a diffeomorphism, so that $f$ is indeed properly embedded.

We now show that $\partial_\infty f(\Bbb{D})$ is a Jordan curve in $\partial_\infty\mathbb{H}^3$. To see this, let $f':\D\rightarrow\H^3$ be an isometric parametrization of a totally geodesic plane, let $\nu'$ denote its unit normal vector field, and define $\tilde{f}':\D\times\R\rightarrow\H^3$ by
\begin{equation*}
\tilde{f}'(x,t)=\opExp_{f'(x)}(t\nu'(x))\ .
\end{equation*}
The metric induced by $\tilde f'$ is
\begin{equation*}
\tilde{g}'(x,t)=\cosh^2(t)g\oplus dt^2\ .\label{fuchsexp}
\end{equation*}
By \eqref{ProofShapeOperatorBounded}, $\alpha:=e\circ f^{-1}$ is bilipschitz. It therefore extends (see, for example, Gehring \cite{Geh62} or Mostow \cite{Mos68}) to a quasiconformal map $\tilde{\alpha}$ from $\partial_\infty\H^3$ to itself, from which it follows that $\partial_\infty f(\D)$ is a quasicircle. In particular, it is a Jordan curve, as asserted.

Finally, for all $t$, the restrictions of $\tilde{g}'$ and $\tilde{g}$ to the slice $\D\times\{t\}$ are $K$-quasiconformal to one-another, where
\begin{equation*}
K=\frac{1+\left|\phi\right|_g}{1-\left|\phi\right|_g}\ .
\end{equation*}
Letting $t$ tend to $+\infty$, we see that $\tilde \alpha$ is $K$-quasiconformal. Since $\partial_\infty f(\mathbb{D})$ is the image under $\tilde{\alpha}$ of an equator in $\partial_\infty\mathbb{H}^3$, it follows that this curve is a $K$-quasicircle, and this completes the proof.
\end{proof}

\section{Proofs of main results}\label{sec:MainResult}

We now prove the main results of this note. For all $\phi\in\mathcal{U}_1$, let $\kappa(\phi)$ denote the extrinsic curvature function of its corresponding almost fuchsian conformal minimal embedding.

\begin{theorem}\label{thm:mainresultA}
For all $z\in\mathbb{D}$, the function
\begin{equation}
\kappa_z:\mathcal{U}_1\rightarrow\Bbb{R};\phi\mapsto\kappa(\phi)(z)
\end{equation}
is concave.
\end{theorem}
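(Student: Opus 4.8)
The plan is to fix $z \in \mathbb{D}$ and two Hopf differentials $\phi_0, \phi_1 \in \mathcal{U}_1$, set $\phi_s := (1-s)\phi_0 + s\phi_1$ for $s \in [0,1]$, and show both that $\phi_s \in \mathcal{U}_1$ (so that $\kappa_z$ is even defined along the segment — this convexity of the domain is part of what must be established) and that $s \mapsto \kappa(\phi_s)(z)$ is concave. Writing $u_i$ for the conformal factor associated to $\phi_i$ via the fundamental identity \eqref{eqn:FundamentalIdentity}, and recalling from Gauss' equation \eqref{eqn:GaussEquation} together with \eqref{eqn:CurvatureEquationI} that the extrinsic curvature is $\kappa = \det(A) - 1 = -\Delta^h u - 1 - e^{2u} \cdot 0 \cdots$ — more usefully, from \eqref{eqn:FundamentalIdentity} itself, $e^{2u}\kappa = -\Delta^h u - 1 = -e^{2u} - e^{-2u}|\phi|_h^2 + \cdots$; the cleanest route is to express $\kappa$ pointwise in terms of $u$ and $|\phi|_h$ and reduce the whole problem to a statement about the dependence of the solution $u$ of \eqref{eqn:FundamentalIdentity} on the potential $|\phi|_h^2$. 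The key observation I would exploit is that \eqref{eqn:FundamentalIdentity} depends on $\phi$ only through the nonnegative function $q := |\phi|_h^2$, and that $q_s := |\phi_s|_h^2$ satisfies $\sqrt{q_s} = |\phi_s|_h \le (1-s)|\phi_0|_h + s|\phi_1|_h$ by the triangle inequality, i.e. $q$ as a function of $s$ is dominated by the square of an affine function.

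The main technical step is a comparison/convexity lemma for the solution map. First I would record monotonicity: if $q \le \tilde q$ pointwise then the corresponding solutions satisfy $u \ge \tilde u$ — this follows from the sub/supersolution method (Theorem \ref{subsup}) combined with the Omori--Yau maximum principle (Theorem \ref{thm:OmoriYau}) to get uniqueness, since a solution for $q$ is a supersolution for the equation with $\tilde q$, and one invokes bounded geometry to run Omori--Yau on the difference. Next, for the convexity itself, I would consider $u_s$ the true solution for $q_s$ and the convex combination $v_s := (1-s)u_0 + s u_1$, and compute $\Delta^h v_s + 1 - e^{2v_s} - e^{-2v_s}q_s$; using convexity of $t \mapsto e^{2t}$ and of $t \mapsto e^{-2t}$ in $t$, together with the bound $q_s \le ((1-s)|\phi_0|_h + s|\phi_1|_h)^2$ and convexity of $w \mapsto e^{-2t}w^2$ jointly, one shows $v_s$ is a supersolution (or subsolution) of \eqref{eqn:FundamentalIdentity} with potential $q_s$, hence by the comparison step $u_s \ge v_s$ (resp. $\le$). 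This pins down the sign of $u_s - (1-s)u_0 - su_1$, which is exactly the concavity/convexity of $s \mapsto u_s(z)$, and then one transfers this to $\kappa_z$ through the explicit algebraic relation between $\kappa$, $u$, $\Delta^h u$, and $q$ at the point $z$ — being careful that the relation involves $\Delta^h u$ and not just $u$, so one may instead want to work with $\det(A) = \kappa+1 = H^2 - |\phi|_g^2 = -|\phi|^2_g$ (as $H=0$), giving $\kappa = -1 - e^{-4u}q$, a clean pointwise formula in $u$ and $q$ alone.

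With that formula $\kappa(\phi)(z) = -1 - e^{-4u(z)}|\phi(z)|_h^2$ in hand, concavity of $\kappa_z$ reduces to showing that $(s,w) \mapsto -e^{-4u_s(z)}w^2$, evaluated along $w = |\phi_s(z)|_h$, is concave in $s$ — and here the two ingredients combine: $u_s(z)$ is (I expect) convex in $s$ by the supersolution argument, which makes $-e^{-4u_s(z)}$ concave and bounded above by $0$, while $|\phi_s(z)|_h$ is convex in $s$; one then needs the elementary fact that a product of a nonpositive concave function and the square of a nonnegative convex function... which is \emph{not} automatic, so the precise quantitative interplay has to be checked, and this is where I expect the real work to lie. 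The likely resolution is that the supersolution computation yields not merely convexity of $u_s(z)$ but a sharp differential inequality that exactly compensates: one differentiates \eqref{eqn:FundamentalIdentity} twice in $s$, obtains a linear elliptic inequality for $\partial_s^2 u_s$ with a source term governed by $\partial_s^2 q_s = 2|\partial_s \phi_s|_h^2 \ge 0$ and by $(\partial_s u_s)^2$ terms, and applies the maximum principle (again via Omori--Yau, using that everything is bounded with bounded geometry because we are inside $\Omega_1$) to conclude the correct sign of $\partial_s^2 \kappa_z$ directly. The main obstacle is thus organizing this second-variation computation so that all the curvature-type terms assemble with a definite sign, and verifying that the open condition defining $\mathcal{U}_1$ (i.e. $\sup e^{-4u}|\phi|_h^2 < 1$) is preserved along the segment so that the argument does not leave the domain — the latter should follow from the monotonicity of $u$ in $q$ together with the triangle-inequality bound on $q_s$.
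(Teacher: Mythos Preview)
Your high-level plan coincides with the paper's: write $\kappa(\phi)(z)=-e^{-4u(z)}|\phi(z)|_h^2$ (up to the harmless additive constant $-1$), differentiate the fundamental identity twice along the segment $\phi_t=(1-t)\phi_0+t\phi_1$, and close with the Omori--Yau maximum principle. Where your proposal falls short is precisely at the step you flag as ``where I expect the real work to lie'': you aim for an elliptic inequality for $\ddot u_t$ alone, but the second-variation equation contains the cross term $-8\dot u_t e^{-2u_t}\langle\phi_t,\phi_1-\phi_0\rangle$, which has no sign, so no maximum-principle argument applied to $\ddot u_t$ by itself will succeed. The paper's key insight is to work instead with the combination
\[
w_t:=\ddot u_t+4\dot u_t^2,
\]
i.e.\ (up to a factor) the second derivative of $e^{4u_t}$. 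Using $\Delta(\dot u_t^2)\ge 2\dot u_t\,\Delta\dot u_t$ and the first-variation equation, the extra terms supplied by $4\dot u_t^2$ allow one to complete the square $8\dot u_t^2|\phi_t|^2+8\dot u_t\langle\phi_t,\dot\phi\rangle+2|\dot\phi|^2=2|2\dot u_t\phi_t+\dot\phi|^2\ge 0$, yielding
\[
\Delta^h w_t\ge 2e^{2u_t}(1-e^{-4u_t}|\phi_t|^2)\,w_t,
\]
and hence $w_t\le 0$ by Omori--Yau. Concavity of $\kappa_t(z)$ then follows from a \emph{second} completion of the square:
\[
e^{4u_t}\partial_t^2\kappa_t=4w_t|\phi_t|^2-2\bigl|4\dot u_t\phi_t-\dot\phi\bigr|^2\le 0.
\]

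Two remarks on your heuristic attempts. First, the sub/supersolution comparison with $v_s=(1-s)u_0+su_1$ would at best control $u_s-v_s$, i.e.\ convexity or concavity of $s\mapsto u_s(z)$; but what is actually needed is concavity of $s\mapsto e^{4u_s(z)}$, a strictly stronger (and different) statement. Second, your expectation that $u_s$ is convex has the wrong sign: from $w_t\le 0$ one gets $\ddot u_t\le -4\dot u_t^2\le 0$, so $u_s$ is in fact concave. Relatedly, the implication ``$u$ convex $\Rightarrow -e^{-4u}$ concave'' you invoke is false in general (take $u(s)=s^2$ near $s=0$). So the first, more elementary, route does not close, and the second-variation route requires the specific auxiliary quantity $\ddot u+4\dot u^2$ that your proposal does not identify.
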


\begin{theorem}\label{thm:mainresultB}
The solution space $\mathcal{Z}$ has the following properties.
\begin{enumerate}
\item $\mathcal{Z}\cap\Omega_1$ is a smooth graph over its image,
\item for all $0<\lambda\leq 1$, $\mathcal{U}_\lambda$ is convex, and
\item for all $0<\lambda\leq 1$,
\begin{equation}
B(0,\lambda/(1+\lambda^2)) \subseteq \mathcal{U}_\lambda \subseteq B(0,\lambda),
\end{equation}
where, for all $r>0$, $B(0,r)$ denotes the open ball of radius $r$ about the origin in $\mathcal{Q}(\mathbb{D})$ with respect to the $C^0$ norm.
\end{enumerate}
\end{theorem}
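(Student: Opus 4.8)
The plan is to derive all three parts from Theorem~\ref{thm:mainresultA}, the implicit function theorem, the Omori--Yau principle (Theorem~\ref{thm:OmoriYau}), and the sub--supersolution method (Theorem~\ref{subsup}), the three nontrivial ingredients being a uniqueness statement, an a priori $C^0$ estimate, and an explicit pair of sub/supersolutions.

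\textbf{Part (1).} The partial derivative of $\mathcal{F}$ in the $u$-direction at a point $(u_0,\phi_0)\in\mathcal{Z}\cap\Omega_1$ is the operator $L:=\Delta^h-c$ with $c:=2e^{2u_0}\bigl(1-e^{-4u_0}|\phi_0|_h^2\bigr)$; since $u_0\in C^{2,\alpha}(\mathbb{D})$ is bounded and $\sup_z e^{-4u_0(z)}|\phi_0(z)|_h^2<1$, the coefficient $c$ is bounded and bounded below by a positive constant, so $L\colon C^{2,\alpha}(\mathbb{D})\to C^{0,\alpha}(\mathbb{D})$ is an isomorphism. The implicit function theorem then exhibits $\mathcal{Z}$ near $(u_0,\phi_0)$ as a smooth graph $u=s(\phi)$, which remains in the open set $\Omega_1$ for $\phi$ near $\phi_0$; in particular $\mathcal{U}_1$ is open and $\mathcal{Z}\cap\Omega_1$ is locally a smooth graph over $\mathcal{U}_1$. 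To upgrade this to a global graph I would prove uniqueness: if $(u_0,\phi)$ and $(u_1,\phi)$ both lie in $\mathcal{Z}\cap\Omega_1$, apply Theorem~\ref{thm:OmoriYau} to $w:=u_0-u_1$; at an almost-maximum point $x$, the mean value theorem gives $\Delta^hw(x)=2w(x)\bigl(e^{2\xi}-|\phi(x)|_h^2e^{-2\eta}\bigr)$ with $\xi,\eta$ between $u_0(x)$ and $u_1(x)$, and the bracket is positive because $(u_j,\phi)\in\Omega_1$ forces $e^{4u_j(x)}>|\phi(x)|_h^2$; hence $\sup w\leq 0$, and by symmetry $w\equiv 0$.

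\textbf{A priori estimate and Part (3).} If $(u,\phi)\in\mathcal{Z}$ and $\sup_z e^{-4u(z)}|\phi(z)|_h^2\leq\mu$, then writing \eqref{eqn:FundamentalIdentity} as $\Delta^h u=-1+e^{2u}\bigl(1+e^{-4u}|\phi|_h^2\bigr)$ and applying Theorem~\ref{thm:OmoriYau} to $u$ and to $-u$ yields $-\tfrac12\log(1+\mu)\leq u\leq 0$. The bound $u\leq 0$ gives the upper inclusion in Part (3): for $\phi\in\mathcal{U}_\lambda$ with $u=s(\phi)$ one has $|\phi(z)|_h=e^{2u(z)}|\phi(z)|_g\leq|\phi(z)|_g$, so $\|\phi\|_{C^0}\leq\sup_z|\phi(z)|_g=\bigl(\sup_z e^{-4u(z)}|\phi(z)|_h^2\bigr)^{1/2}<\lambda$. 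For the lower inclusion, given $\phi$ with $\rho:=\|\phi\|_{C^0}<\lambda/(1+\lambda^2)\leq\tfrac12$, set $x_+:=\tfrac12\bigl(1+\sqrt{1-4\rho^2}\bigr)\in(0,1)$, so that $x_++\rho^2/x_+\leq 1$; then $u_-:=\tfrac12\log x_+<0$ and $u_+:=0$ are, respectively, a sub- and a supersolution of \eqref{eqn:FundamentalIdentity}, so Theorem~\ref{subsup} produces a solution $u$ with $u_-\leq u\leq 0$, necessarily in $C^{2,\alpha}(\mathbb{D})$ and hence $(u,\phi)\in\mathcal{Z}$. Since $\sup_z e^{-4u(z)}|\phi(z)|_h^2\leq\rho^2/x_+^2$, and since an elementary computation shows $\rho/x_+<\lambda$ precisely because $\rho<\lambda/(1+\lambda^2)$ (equality holding in the limiting case, where $x_+=1/(1+\lambda^2)$), we get $(u,\phi)\in\mathcal{Z}\cap\Omega_\lambda$, i.e.\ $\phi\in\mathcal{U}_\lambda$.

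\textbf{Part (2) and the main obstacle.} It suffices to treat $\lambda<1$, since every point of $\mathcal{U}_1$ has curvature supremum strictly below $1$, whence $\mathcal{U}_1=\bigcup_{0<\lambda<1}\mathcal{U}_\lambda$ is a nested union, and a nested union of convex sets is convex. Fix $\lambda<1$ and $\phi_0,\phi_1\in\mathcal{U}_\lambda$, put $\phi_t:=(1-t)\phi_0+t\phi_1$ and $\mu:=\max_{i}\sup_z(-\kappa(\phi_i)(z))<\lambda^2$, and consider $J:=\{t\in[0,1]:\phi_t\in\mathcal{U}_1\text{ and }\sup_z(-\kappa(\phi_t)(z))\leq\mu\}$, which contains $0$ and $1$. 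Closedness of $J$ follows from the a priori estimate together with elliptic regularity and Arzel\`a--Ascoli: along $t_n\to t_*$ in $J$ the conformal factors $s(\phi_{t_n})$ are uniformly bounded in $C^0$, hence precompact in $C^{2,\alpha'}_{\text{loc}}$, and a subsequential limit solves \eqref{eqn:FundamentalIdentity} with $\phi_{t_*}$ while still satisfying $e^{-4u(z)}|\phi_{t_*}(z)|_h^2\leq\mu$ at every $z$; by uniqueness this limit is $s(\phi_{t_*})$, so $t_*\in J$. For openness, Theorem~\ref{thm:mainresultA} is the key input: on any subinterval on which $\phi_t\in\mathcal{U}_1$, each function $t\mapsto-\kappa(\phi_t)(z)$ is convex, so $f(t):=\sup_z(-\kappa(\phi_t)(z))$ is convex there, and once the whole segment $\{\phi_t\}_{t\in[0,1]}$ is known to lie in $\mathcal{U}_1$, convexity of $f$ together with $f(0),f(1)\leq\mu$ forces $f\leq\mu$ on $[0,1]$, so $J=[0,1]$ and $\phi_t\in\mathcal{U}_\lambda$ throughout. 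The point I expect to require the most care, and which is the heart of Part (2), is exactly the hypothesis just used: that the affine segment from $\phi_0$ to $\phi_1$ never leaves $\mathcal{U}_1$. The set $\{t:\phi_t\in\mathcal{U}_1\}$ is open and contains $0,1$; if it were not all of $[0,1]$, then at a first exit time the a priori estimate would confine the conformal factors and exhibit the boundary parameter as the Hopf differential of a complete conformal minimal embedding with $\sup_z\|A\|_g=1$, and ruling this out is where the convexity of $t\mapsto f(t)$ from Theorem~\ref{thm:mainresultA} must be played against the a priori bounds so as to keep $f$ strictly below $1$ up to that time. Once connectedness of $\{t:\phi_t\in\mathcal{U}_1\}$ is secured, the remaining steps are the routine openness/closedness and supremum manipulations sketched above.
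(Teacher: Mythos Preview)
Your arguments for Parts (1) and (3) are essentially correct and parallel the paper's, with only cosmetic differences (you use a sharper subsolution $u_-=\tfrac12\log x_+$ where the paper uses $u_-=-\tfrac12\log(1+\lambda^2)$, and you invoke the invertibility of $\Delta^h-c$ on $C^{2,\alpha}(\mathbb{D})$ as a known fact rather than proving it by Omori--Yau plus exhaustion).

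Part (2), however, has a genuine gap, and you correctly locate it yourself: you need the entire affine segment $t\mapsto\phi_t$ to lie in $\mathcal{U}_1$ before Theorem~\ref{thm:mainresultA} can be invoked, but your continuity argument does not establish this. Concretely, on the connected component $[0,t_*)$ of $\{t:\phi_t\in\mathcal{U}_1\}$ containing $0$, convexity of $f(t)=\sup_z(-\kappa(\phi_t)(z))$ together with $f(0)\leq\mu$ gives no upper bound on $f$ near $t_*$; compactness produces a limiting pair $(u_*,\phi_{t_*})\in\mathcal{Z}$ with $\sup_z e^{-4u_*}|\phi_{t_*}|_h^2=1$, and nothing you have written rules this out. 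The vague phrase ``the convexity of $t\mapsto f(t)$ must be played against the a priori bounds'' does not constitute an argument, and I do not see how to make the one-parameter scheme work without additional input.

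The paper closes this gap by introducing a second parameter: it considers the two-parameter family $\phi_{s,t}=s\big((1-t)\phi_0+t\phi_1\big)$ and runs the open--closed argument in $s$, with $I=\{s:\phi_{s,t}\in\mathcal{U}_\lambda\ \text{for all }t\}$. The point is that $0\in I$ trivially, and for the closedness step one can apply Theorem~\ref{thm:mainresultA} along each segment $t\mapsto\phi_{s_m,t}$ (already known to lie in $\mathcal{U}_\lambda\subseteq\mathcal{U}_1$ since $s_m\in I$), bounding $-\kappa(\phi_{s_m,t})(z)$ by the endpoint values $-\kappa(s_m\phi_0)(z)$ and $-\kappa(s_m\phi_1)(z)$. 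These in turn are controlled by $-\kappa(\phi_0)(z)$ and $-\kappa(\phi_1)(z)$ via a monotonicity lemma (Lemma~\ref{lemma:monotonicity} in the paper): if $|\psi|_h\leq|\phi|_h$ pointwise with $\phi\in\mathcal{U}_1$, then $\psi\in\mathcal{U}_1$ and moreover $e^{-4u}|\psi|_h^2$ is no larger. This monotonicity, which you neither state nor prove, together with the two-parameter homotopy through the origin, is what replaces the missing step in your sketch.
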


In order to prove these results, we first consider the more general function
\begin{equation}\label{eqn:DefOfHatF}
\hat{\mathcal{F}}:C^{2,\alpha}(\Bbb{D})\times C^{0,1}(\Bbb{D})\rightarrow C^{0,\alpha}(\Bbb{D})\ ;(u,f)\mapsto \Delta^h u + 1 - e^{2u} - e^{-2u}f.
\end{equation}
As before, $\hat{\mathcal{F}}$ is a smooth function between Banach spaces. Let $\hat{\mathcal{Z}}:=\hat{\mathcal{F}}^{-1}$ denote its zero set, let $\pi:Z\rightarrow C^{0,1}(\Bbb{D})$ denote the projection onto the first factor, and, for all $\lambda>0$, define
\begin{equation}\label{eqn:OmegaLambdaAgain}
\eqalign{
\hat{\Omega}_\lambda&:=\big\{(u,f)\ \big|\ \sup_{z\in\mathbb{D}}e^{-4u(z)}f(z)<\lambda^2\big\}\ ,\ \text{and}\cr
\hat{\mathcal{U}}_\lambda&:=\pi(\mathcal{Z}\cap\hat{\Omega}_\lambda)\ .\vphantom{\big(}\cr}
\end{equation}

\begin{lemma}\label{lemma:aProiriEstimates}
For all $\lambda>0$, and for all $(u,f)\in\hat{\mathcal{Z}}\cap\hat{\Omega}_\lambda$,
\begin{equation}\label{ref:aProiriEstimates}
-\frac{1}{2}\opLn(1+\lambda^2)\leq u\leq 0\ .
\end{equation}
\end{lemma}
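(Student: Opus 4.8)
The plan is to apply the Omori--Yau maximum principle (Theorem \ref{thm:OmoriYau}) to the function $u$ on the complete surface $(\mathbb{D},g)$ where $g=e^{2u}h$, in two directions, extracting the two inequalities from the two extremal behaviours of the fundamental identity \eqref{eqn:FundamentalIdentity}. The key observation is that since $(u,f)\in\hat{\mathcal{Z}}$ means $\Delta^h u = -1 + e^{2u} + e^{-2u}f$, multiplying through by $e^{-2u}$ gives
\begin{equation*}
\Delta^g u = -e^{-2u} + 1 + e^{-4u}f\ ,
\end{equation*}
and the hypothesis $(u,f)\in\hat{\Omega}_\lambda$ says precisely that $e^{-4u}f<\lambda^2$ pointwise. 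I should first record that $u$ is bounded (this is essentially contained in the proof of Lemma \ref{ZIsConformalMinimalImmersions}, since $g=e^{2u}h$ complete and $-e^{-2u}g=h$ forces $u$ bounded), so that both $u$ and $-u$ are bounded functions on the complete manifold $(\mathbb{D},g)$, whose Ricci (here, sectional) curvature is $\kappa\in[-1,1]$ hence bounded below, making Theorem \ref{thm:OmoriYau} applicable in both forms.

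\textbf{Upper bound $u\le 0$.} Suppose for contradiction that $\sup_{\mathbb{D}} u =: M>0$. Apply Omori--Yau to $u$ on $(\mathbb{D},g)$: for every $\eps>0$ there is a point $x$ with $u(x)\ge M-\eps$ and $(\Delta^g u)(x)\le\eps$. At such a point, using $f\ge 0$ (note $f=|\phi|_h^2\ge 0$ in the case of interest, but in the general $\hat{\mathcal{F}}$ setting one must either assume $f\ge 0$ or argue only from $e^{-4u}f\ge 0$; I will assume, as is implicit, that $f\ge 0$),
\begin{equation*}
\eps \ge (\Delta^g u)(x) = -e^{-2u(x)} + 1 + e^{-4u(x)}f(x) \ge 1 - e^{-2u(x)} \ge 1 - e^{-2(M-\eps)}\ .
\end{equation*}
Letting $\eps\to 0$ yields $0\ge 1-e^{-2M}$, i.e. $M\le 0$, a contradiction. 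Hence $u\le 0$ everywhere.

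\textbf{Lower bound.} Apply the dual form of Omori--Yau to $-u$ (equivalently, the $\inf$ version): set $m:=\inf_{\mathbb{D}} u$, which is finite by boundedness, and for every $\eps>0$ find $x$ with $u(x)\le m+\eps$ and $(\Delta^g u)(x)\ge -\eps$. At such a point,
\begin{equation*}
-\eps \le (\Delta^g u)(x) = -e^{-2u(x)} + 1 + e^{-4u(x)}f(x) < -e^{-2u(x)} + 1 + \lambda^2 \le -e^{-2(m+\eps)} + 1 + \lambda^2\ ,
\end{equation*}
using $e^{-4u}f<\lambda^2$ and then $u(x)\le m+\eps$. Letting $\eps\to 0$ gives $0\le -e^{-2m}+1+\lambda^2$, i.e. $e^{-2m}\le 1+\lambda^2$, i.e. $m\ge -\tfrac12\opLn(1+\lambda^2)$, which is \eqref{ref:aProiriEstimates}.

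\textbf{Main obstacle.} The only genuinely delicate point is justifying that $u$ is \emph{a priori} bounded so that Omori--Yau applies on $(\mathbb{D},g)$; I expect to dispatch this exactly as in Lemma \ref{ZIsConformalMinimalImmersions}, or alternatively to run the sub/supersolution argument directly on $(u,f)$ to get crude two-sided bounds before sharpening them via the maximum principle. A secondary subtlety is the sign of $f$: in the reduction to \eqref{eqn:DefOfHatF} one works with general $f\in C^{0,1}(\mathbb{D})$, but the a priori estimate as stated uses $f\ge 0$ for the upper bound, so I would state and use this positivity (which holds automatically when $f=|\phi|_h^2$). Everything else is the two symmetric applications of Theorem \ref{thm:OmoriYau} sketched above.
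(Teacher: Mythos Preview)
Your argument is correct and follows essentially the same route as the paper: two applications of the Omori--Yau maximum principle, one for each inequality. The only difference is cosmetic: the paper applies Omori--Yau directly on $(\mathbb{D},h)$ using $\Delta^h u=-1+e^{2u}+e^{-2u}f$, rather than passing to $(\mathbb{D},g)$ and $\Delta^g$, which has the advantage that the hypotheses of Theorem~\ref{thm:OmoriYau} are immediate (constant curvature $-1$) and your ``main obstacle'' never arises---indeed, that obstacle is illusory anyway, since $u\in C^{2,\alpha}(\mathbb{D})$ by hypothesis already means $u$ is bounded. (A minor slip: the intrinsic curvature of $g$ is $\kappa=-1-e^{-4u}f\in(-1-\lambda^2,-1]$, not $[-1,1]$, but it is still bounded below as you need.)
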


\begin{proof} Suppose first that
\begin{equation*}
c^+:=\sup_{z\in\Bbb{D}}u(z)>0\ .
\end{equation*}
By the Omori-Yau maximum principle, there exists $z_0\in\Bbb{D}$ such that
\begin{equation*}
(\Delta^h u)(z_0) < e^{c^+} - 1\ ,\ \text{and}\ u(z_0) > c^+/2\ .
\end{equation*}
At this point,
\begin{equation*}
\hat{\mathcal{F}}(u,f) < (e^{c^+} - 1) + 1 - e^{c^+} = 0\ ,
\end{equation*}
which is absurd, and it follows that $u\leq 0$.

Suppose now that
\begin{equation*}
c^-:=\inf_{z\in\Bbb{D}}u(z)<-\frac{1}{2}\opLn(1+\lambda^2)\ .
\end{equation*}
Choose $\delta>0$ such that
\begin{equation*}\triplealign{
&c^-+\delta &< -\frac{1}{2}\opLn(1+\lambda^2)\cr
\Leftrightarrow&e^{2(c^-+\delta)}(1+\lambda^2) &< 1\ .\cr}
\end{equation*}
By the Omori-Yau maximum principle, there exists $z_0\in\Bbb{D}$ such that
\begin{equation*}
(\Delta^h u)(z_0) > -(1 - e^{2(c^-+\delta)}(1+\lambda^2))\ ,\ \text{and}\ u(z_0) < c^-+\delta\ .
\end{equation*}
At this point,
\begin{equation*}
e^{-2u(z_0)} f(z_0) = e^{2u(z_0)}(e^{-4u(z_0)}f(z_0)) \leq \lambda^2 e^{2u(z_0)}\ .
\end{equation*}
Consequently,
\begin{equation*}
\hat{\mathcal{F}}(u,f) > -(1 - e^{2(c^-+\delta)}(1+\lambda^2)) + 1 - e^{2u(z_0)}(1+\lambda^2)  > 0\ ,
\end{equation*}
which is absurd. It follows that $u\geq -\opLn(1+\lambda^2)/2$, and this completes the proof.
\end{proof}

\begin{lemma}\label{lemma:uniqueness}
The restriction of $\pi$ to $\hat{Z}\cap\hat{\Omega}_1$ is injective.
\end{lemma}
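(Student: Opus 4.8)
The plan is to prove that if $(u_1,f)$ and $(u_2,f)$ both belong to $\hat{\mathcal{Z}}\cap\hat{\Omega}_1$ --- that is, if the $f$-component is fixed --- then $u_1=u_2$. Write $w:=u_1-u_2$. By Lemma \ref{lemma:aProiriEstimates}, applied with $\lambda=1$, both $u_1$ and $u_2$ take values in $[-\tfrac12\opLn 2,0]$, so $w\in C^{2,\alpha}(\mathbb{D})$ is bounded. Subtracting the identities $\hat{\mathcal{F}}(u_i,f)=0$ and using $e^{-2u_1}-e^{-2u_2}=-(e^{2u_1}-e^{2u_2})/e^{2(u_1+u_2)}$, a short computation rewrites the difference in the factored form
\begin{equation*}
\Delta^h w=\bigl(e^{2u_1}-e^{2u_2}\bigr)\cdot\frac{e^{2(u_1+u_2)}-f}{e^{2(u_1+u_2)}}.
\end{equation*}

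I would then record two facts about the right-hand side. First, the middle factor $e^{2(u_1+u_2)}-f$ is everywhere positive: membership in $\hat{\Omega}_1$ forces $f<e^{4u_1}$ and $f<e^{4u_2}$ pointwise, and multiplying these (or simply using $f\leq 0$ where $f$ is negative) gives $f<e^{2(u_1+u_2)}$; this is the step where the hypothesis $\lambda=1$, rather than $\lambda>1$, is essential. Second, $e^{2u_1}-e^{2u_2}$ has the sign of $w$, and by the mean value theorem together with $u_i\geq-\tfrac12\opLn 2$ one has $|e^{2u_1}-e^{2u_2}|\geq|w|$. Combining these with $e^{2(u_1+u_2)}-f>e^{2(u_1+u_2)}-e^{4u_2}=e^{2u_2}(e^{2u_1}-e^{2u_2})\geq\tfrac12 w$ and $e^{2(u_1+u_2)}\leq 1$ yields, at every point where $w>0$, the quadratic estimate $\Delta^h w\geq\tfrac12 w^2$.

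Finally I would invoke the Omori--Yau maximum principle (Theorem \ref{thm:OmoriYau}), which applies since $(\mathbb{D},h)$ is complete with Ricci curvature $\equiv -h$ and $w$ is bounded above. If $m:=\sup_{\mathbb{D}}w>0$, then for each $\eps\in(0,m)$ there is $z_\eps$ with $w(z_\eps)>m-\eps$ and $(\Delta^h w)(z_\eps)<\eps$; the quadratic estimate gives $\eps>\tfrac12 w(z_\eps)^2>\tfrac12(m-\eps)^2$, and sending $\eps\to0$ forces $m\leq0$, a contradiction. Hence $u_1\leq u_2$, and exchanging the roles of $u_1$ and $u_2$ gives $u_1=u_2$.

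The delicate point --- the step I expect to be the crux --- is obtaining a \emph{usable} lower bound for $\Delta^h w$ at points where $w>0$. A naive argument only yields $\Delta^h w>0$ there, which is not enough on the non-compact disk, because the factor $e^{2(u_1+u_2)}-f$ has no uniform positive lower bound. The resolution is the observation that precisely where this factor degenerates, $u_1$ and $u_2$ are forced to be close, so the degeneration is absorbed into a second power of $w$; this is what produces the $w^2$ and makes the Omori--Yau step close. One should also take a little care with the positivity $f<e^{2(u_1+u_2)}$, since in the generality of $\hat{\mathcal{F}}$ the function $f$ is not assumed to be nonnegative.
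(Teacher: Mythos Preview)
Your proof is correct and follows the same outline as the paper's --- subtract the two equations, bound $\Delta^h w$ from below on $\{w>0\}$, and apply Omori--Yau --- but the intermediate estimate you derive is more intricate than needed. The worry in your final paragraph, that the factor $e^{2(u_1+u_2)}-f$ might degenerate, is unfounded: by definition of $\hat\Omega_1$ there is a constant $s<1$ with $f\le s\,e^{4u_2}$ everywhere, so on $\{w>0\}$ (where $e^{2(u_1+u_2)}\ge e^{4u_2}$) one has $e^{2(u_1+u_2)}-f\ge(1-s)e^{4u_2}\ge(1-s)/4$ uniformly. The paper exploits precisely this uniform gap: choosing $0<\delta<\inf_z 2e^{2u_2(z)}\bigl(1-e^{-4u_2(z)}f(z)\bigr)$ and applying the convexity inequality $e^y-e^x\ge e^x(y-x)$ yields directly the linear bound $\Delta^h w\ge\delta w$ on $\{w\ge0\}$, after which Omori--Yau finishes in one line. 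Your quadratic estimate $\Delta^h w\ge\tfrac12 w^2$ is a valid alternative --- and would even survive under the weaker closed condition $\sup e^{-4u_i}f\le 1$ --- but the extra work is not required here.
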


\begin{proof}Suppose the contrary, so that there exist $u,u\in C^{2,\alpha}(\Bbb{D})$ and $f\in C^{0,1}(\Bbb{D})$ such that $(u,f),(u',f)\in\hat{\mathcal{Z}}\cap\hat{\Omega}_1$. Denote $w:=u-u'$, and without loss of generality, suppose that
\begin{equation*}
c:=\sup_{x\in\Bbb{D}}w(z)>0\ .
\end{equation*}
Choose $0<\delta<1$ be such that
\begin{equation*}
\delta<\inf_{z\in\Bbb{D}} 2e^{2u'(z)}(1-e^{-4u'(z)}f(z))\ .
\end{equation*}
By hypothesis,
\begin{equation*}
\hat{\mathcal{F}}(u,f) = \hat{\mathcal{F}}(u',f)=0\ .
\end{equation*}
By convexity, for all $x,y\in\Bbb{R}$,
\begin{equation*}\label{eqn:convexity}
e^y-e^x \geq e^x (y-x)\ .
\end{equation*}
Bearing in mind \eqref{eqn:DefOfHatF}, these relations together yield,
\begin{equation*}
\Delta^h w = \Delta^h (u-u') = e^{2u} + e^{-2u}f - e^{2u'} - e^{-2u'}f\ .
\end{equation*}
When $w\geq 0$ and $f\geq 0$, by \eqref{eqn:convexity},
\begin{equation*}
\Delta^hw\geq2e^{2u'}(u-u') + 2e^{-2u'}(u'-u)f=2e^{2u'}(1-e^{-4u'}f)w\geq\delta w\ .
\end{equation*}
On the other hand, when $w\geq 0$ and $f<0$,
\begin{equation*}
\Delta^hw\geq2e^{2u'}(u-u')=2e^{2u'}w\geq\delta w\ .
\end{equation*}
However, by the Omori-Yau maximum principle, there exists $z_0\in\Bbb{D}$ such that $(\Delta^h w)(z_0)<(\delta c)/2$ and $w(z_0)>c/2$. This contradicts the above relation, and it follows that $w$ vanishes, as desired.
\end{proof}

\begin{lemma}\label{lemma:openness}
$\hat{\mathcal{Z}}\cap\hat{\Omega}_1$ is a Banach submanifold modelled on $C^{0,1}(\Bbb{D})$ and the canonical projection $\pi:\hat{\mathcal{Z}}\cap\hat{\Omega}_1\rightarrow\hat{\mathcal{U}}_1$ is diffeomorphism onto its image.
\end{lemma}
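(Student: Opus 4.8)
The plan is to apply the implicit function theorem for Banach manifolds to the smooth map $\hat{\mathcal{F}}$, using the a priori estimates and uniqueness already established. First I would fix $(u_0,f_0)\in\hat{\mathcal{Z}}\cap\hat{\Omega}_1$ and compute the partial derivative $D_u\hat{\mathcal{F}}(u_0,f_0):C^{2,\alpha}(\mathbb{D})\to C^{0,\alpha}(\mathbb{D})$. Differentiating \eqref{eqn:DefOfHatF} gives the linearized operator
\begin{equation*}
L(v) = \Delta^h v - 2e^{2u_0}v + 2e^{-2u_0}f_0\, v = \Delta^h v - 2\big(e^{2u_0} - e^{-2u_0}f_0\big)v.
\end{equation*}
The key point is that the zeroth-order coefficient has a definite sign: since $(u_0,f_0)\in\hat{\Omega}_1$ we have $e^{-4u_0}f_0 < 1$, so $e^{2u_0} - e^{-2u_0}f_0 = e^{2u_0}(1 - e^{-4u_0}f_0) > 0$, and moreover Lemma \ref{lemma:aProiriEstimates} bounds $u_0$ from both sides so this quantity is bounded away from $0$ and $\infty$ uniformly on $\mathbb{D}$. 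Thus $L = \Delta^h - q$ with $q$ a positive bounded function bounded below by a positive constant. I would then invoke the standard Schauder theory for such operators on the Poincaré disk (a complete manifold of bounded geometry): $L:C^{2,\alpha}(\mathbb{D})\to C^{0,\alpha}(\mathbb{D})$ is a Banach space isomorphism. Existence of solutions follows because $\Delta^h - q$ with $q\geq c > 0$ admits a Perron/sub-supersolution argument (or a direct variational argument, the quadratic form $\int |\nabla v|^2 + q v^2$ being coercive); injectivity follows from the maximum principle exactly as in Lemma \ref{lemma:uniqueness}; boundedness of the inverse is the a priori $C^0$ estimate $\|v\|_{C^0}\leq c^{-1}\|Lv\|_{C^0}$ promoted to $C^{2,\alpha}$ by interior Schauder estimates on balls of fixed hyperbolic radius, using that $\|Lv\|_{C^{0,\alpha}}$ controls $\|\Delta^h v\|_{C^{0,\alpha}}$ once $\|v\|_{C^0}$ is known.

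Granted that $D_u\hat{\mathcal{F}}(u_0,f_0)$ is an isomorphism at every point of $\hat{\mathcal{Z}}\cap\hat{\Omega}_1$, the implicit function theorem gives, for each such $(u_0,f_0)$, a neighbourhood $V$ of $f_0$ in $C^{0,1}(\mathbb{D})$ and a smooth map $v\mapsto u(v)$ from $V$ to $C^{2,\alpha}(\mathbb{D})$ with $u(f_0) = u_0$ and $\hat{\mathcal{F}}(u(v),v) = 0$; shrinking $V$ we may assume the graph of $u(\cdot)$ lands in $\hat{\Omega}_1$, since $\hat{\Omega}_1$ is open and $(u_0,f_0)$ lies in its interior. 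This exhibits $\hat{\mathcal{Z}}\cap\hat{\Omega}_1$ locally as the graph of a smooth map over an open subset of $C^{0,1}(\mathbb{D})$, hence as a Banach submanifold modelled on $C^{0,1}(\mathbb{D})$, with $\pi$ a local diffeomorphism onto an open set. Combined with the global injectivity of $\pi$ from Lemma \ref{lemma:uniqueness}, a bijective local diffeomorphism is a diffeomorphism onto its image $\hat{\mathcal{U}}_1$, which completes the proof.

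The main obstacle is the invertibility claim for $L = \Delta^h - q$ on the non-compact complete surface $(\mathbb{D},h)$: one must be careful that the relevant function-space theory (Schauder estimates, solvability, uniqueness) is the one adapted to the Poincaré metric rather than to a compact surface. The positivity and uniform boundedness of $q$, guaranteed precisely by the defining inequality of $\hat{\Omega}_1$ together with Lemma \ref{lemma:aProiriEstimates}, is what makes everything work: it rules out $L^2$-kernel, gives the coercive energy for existence, and supplies the uniform $C^0$ bound on $L^{-1}$. A secondary point to verify is that the implicit-function neighbourhood can be chosen to keep the graph inside $\hat{\Omega}_1$; this is immediate from openness of $\hat{\Omega}_1$ and continuity of $v\mapsto u(v)$ into $C^{2,\alpha}\hookrightarrow C^0$. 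I expect the remaining bookkeeping — smoothness of $\hat{\mathcal{F}}$, the chain of Schauder estimates — to be routine given the tools already assembled in Sections \ref{sec:GaussCodazzi} and \ref{sec:AlmostFuchsian}.
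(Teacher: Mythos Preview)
Your proposal is correct and follows essentially the same route as the paper: compute the linearization $L=\Delta^h - q$ with $q=2e^{2u_0}(1-e^{-4u_0}f_0)>0$, show it is a Banach isomorphism $C^{2,\alpha}\to C^{0,\alpha}$, apply the implicit function theorem, and then upgrade the local diffeomorphism to a global one via Lemma~\ref{lemma:uniqueness}. The only substantive difference is in the surjectivity step: the paper argues concretely by solving Dirichlet problems $L_{u,f}v_r=w$ on an exhausting family of hyperbolic balls $B_r(z_0)$, obtaining the uniform bound $\|v_r\|_{C^0}\le \delta^{-1}\|w\|_{C^0}$ from the maximum principle, then using interior Schauder estimates and Arzela--Ascoli to extract a limit in $C^{2,\alpha}$; you instead invoke Perron or variational methods. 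Both are valid given the uniform positive lower bound on $q$, and the paper's exhaustion argument is perhaps the cleanest way to make your ``standard Schauder theory on a manifold of bounded geometry'' claim precise.
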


\begin{proof} The partial derivative of $\mathcal{F}$ with respect to the first component is
\begin{equation*}
L_{u,f} := D_1\hat{\mathcal{F}}(u,f)\cdot v = \Delta^h v - 2e^{2u}(1-e^{-4u}f)v\ .
\end{equation*}
We claim that, for all $(u,f)\in\hat{\mathcal{Z}}\cap\hat{\Omega}_1$, $L_{u,f}$ defines a linear isomorphism from $C^{2,\alpha}(\Bbb{D})$ into $C^{0,\alpha}(\Bbb{D})$. Let $\delta>0$ be such that, for all $z\in\Bbb{D}$,
\begin{equation*}
2e^{2u(z)}\big(1-e^{-4u(z)}f(z)\big) > \delta\ .
\end{equation*}

We first show that $L_{u,f}$ is injective. Indeed, let $v\in C^{2,\alpha}(\Bbb{D})$ be an element of its kernel. Suppose that
\begin{equation*}
c:=\sup_{z\in\Bbb{D}} v(z)>0\ .
\end{equation*}
By the Omori-Yau maximum principle, there exists $z_0\in\Bbb{D}$ such that
\begin{equation*}
(\Delta^h v)(z_0) < \frac{\delta c}{2},\ \text{and}\ v(z_0) > \frac{c}{2}\ .
\end{equation*}
At this point
\begin{equation*}
L_{u,f}v = \Delta^h v  - 2e^{2u}(1-e^{-4u}f)v < \frac{\delta c}{2}-\frac{\delta c}{2} = 0\ ,
\end{equation*}
which is absurd, and it follows that $v\leq 0$. In a similar manner, we show that $v\geq 0$, so that $v$ vanishes. $L_{u,f}$ therefore has trivial kernel in $C^{2,\alpha}(\Bbb{D})$ and is thus injective, as asserted.

We now prove surjectivity. Choose $w\in C^{0,\alpha}(\Bbb{D})$. Choose $z_0\in\Bbb{D}$ and, for all $r>0$, let $B_r(z_0)$ denote the open ball of hyperbolic radius $r$ about this point. Since the zero'th order coefficient of $L_{u,\phi}$ is negative, for all $r$, there exists $v_r\in C_0^{2,\alpha}(B_r(z_0))$ such that
\begin{equation*}
L_{u,f}v_r = w\ .
\end{equation*}
Furthermore, by the maximum principle, for all $r$,
\begin{equation*}
\|v_r\|_{C^0} \leq \frac{1}{\delta}\|w\|_{C^0}\ .
\end{equation*}
It follows by the Schauder estimates that there exists $C>0$ such that, for all $r$, and for all $s>r+1$,
\begin{equation*}
\|v_s\|_{C^{2,\alpha}(B_r(z_0))} \leq C\big(\|v_s\|_{C_0} + \|L_{u,\phi}v_s\|_{C^{0,\alpha}}\big) \leq C\bigg(1+\frac{1}{\delta}\|w\|_{C^{0,\alpha}}\bigg)\ .
\end{equation*}
It follows by the Arzela-Ascoli theorem that $(v_r)_{r>0}$ contains a subsequence that converges in the $C^{2,\beta}_\oploc$ sense, for all $\beta<\alpha$ to some $v\in C^{2,\alpha}(\Bbb{D})$ satisfying $L_{u,f}v=w$. This proves surjectivity.

Finally, by the closed graph theorem, $L_{u,f}$ is a linear isomorphism, and it follows by the implicit function theorem that $\hat{\mathcal{Z}}\cap\hat{\Omega}_1$ is a Banach manifold modelled on $C^{0,1}(\Bbb{D})$ and that the canonical projection is a local diffeomorphism onto its image. By Lemma \ref{lemma:uniqueness}, the canonical projection is then a diffeomorphism onto its image, and this completes the proof.
\end{proof}

\begin{lemma}\label{lemma:monotonicity}
If $f\in\hat{\mathcal{U}}_1$ and if $g\leq f$, then $f\in\hat{\mathcal{U}}_1$.
\end{lemma}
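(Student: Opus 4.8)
The plan is to realise the $g$-analogue of the fundamental equation via the sub-supersolution method of Theorem \ref{subsup}, using the conformal factor attached to $f$ as a subsolution and a large constant as a supersolution, and then to check that the solution so produced, paired with $g$, lies in $\hat{\Omega}_1$; note that $g$ is necessarily an element of $C^{0,1}(\D)$, hence bounded, which we use below.

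More precisely, fix $u\in C^{2,\alpha}(\D)$ with $(u,f)\in\hat{\mathcal{Z}}\cap\hat{\Omega}_1$, so that
\begin{equation*}
\Delta^h u = -1 + e^{2u} + e^{-2u}f\ ,\qquad \lambda^2:=\sup_{z\in\D}e^{-4u(z)}f(z)<1\ ,
\end{equation*}
and recall from Lemma \ref{lemma:aProiriEstimates} that $u\le 0$. We must produce $v\in C^{2,\alpha}(\D)$ with $\Delta^h v = -1 + e^{2v} + e^{-2v}g$ and $\sup_{z\in\D}e^{-4v(z)}g(z)<1$, since this says precisely that $(v,g)\in\hat{\mathcal{Z}}\cap\hat{\Omega}_1$, whence $g=\pi(v,g)\in\hat{\mathcal{U}}_1$. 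Put $F(z,t):=-1+e^{2t}+e^{-2t}g(z)$. Since $g\le f$ and $e^{-2u}>0$, we have $\Delta^h u = -1+e^{2u}+e^{-2u}f\ge F(z,u)$, so $u$ is a subsolution of $\Delta^h v = F(z,v)$. Since $g$ is bounded, any sufficiently large $c>0$ satisfies $-1+e^{2c}+e^{-2c}g(z)\ge 0=\Delta^h c$ for all $z\in\D$; fixing such a $c$, the constant function $c$ is a supersolution, and $u\le 0<c$ gives $u<c$. Theorem \ref{subsup} then provides a solution $v$ with $u\le v\le c$.

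It remains to check regularity and the $\hat{\Omega}_1$ condition. Since $v$ is bounded and $g\in C^{0,1}(\D)\subseteq C^{0,\alpha}(\D)$, the right-hand side $-1+e^{2v}+e^{-2v}g$ lies in $C^{0,\alpha}(\D)$, so the classical interior Schauder theory — with constants uniform over $\D$ by homogeneity of the Poincaré metric, exactly as in the proof of Lemma \ref{ZIsConformalMinimalImmersions} — yields $v\in C^{2,\alpha}(\D)$. For the membership in $\hat{\Omega}_1$, fix $z\in\D$: if $g(z)\ge 0$, then $v\ge u$ forces $e^{-4v(z)}g(z)\le e^{-4u(z)}g(z)\le e^{-4u(z)}f(z)\le\lambda^2$; and if $g(z)<0$, then $e^{-4v(z)}g(z)<0\le\lambda^2$. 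Hence $\sup_{z\in\D}e^{-4v(z)}g(z)\le\lambda^2<1$, so $(v,g)\in\hat{\mathcal{Z}}\cap\hat{\Omega}_1$, and therefore $g\in\hat{\mathcal{U}}_1$, as desired.

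The substantive point is the choice of subsolution: the observation that the conformal factor associated to $f$ is automatically a subsolution for every $g\le f$ is what drives the argument and makes membership in $\hat{\Omega}_1$ fall out of the comparison $v\ge u$ (together with the harmless case split on the sign of $g$). The only mild technical wrinkle is that Theorem \ref{subsup} is stated for smooth $F$ whereas here $F$ is merely Hölder in $z$; this is immaterial, as the sub-supersolution construction goes through verbatim under this weaker hypothesis (alternatively, one first treats smooth $g$ and passes to a limit using the a priori estimates of Lemma \ref{lemma:aProiriEstimates}).
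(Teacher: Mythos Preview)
Your argument is correct and takes a genuinely different, more direct route than the paper. The paper proves the lemma by a continuity method: it interpolates $f_t:=(1-t)f+tg$, uses Lemma~\ref{lemma:openness} for openness, the a~priori bounds of Lemma~\ref{lemma:aProiriEstimates} together with Schauder compactness for closedness, and --- crucially --- applies the Omori--Yau principle to the linearised equation to show $\dot u_t\ge 0$, so that $e^{-4u_t}f_t$ is non-increasing along the path and the solution stays inside $\hat\Omega_1$. Your approach bypasses this machinery entirely by observing that the conformal factor $u$ attached to $f$ is automatically a subsolution of the $g$-equation whenever $g\le f$, and then invoking Theorem~\ref{subsup} once with a constant supersolution; the comparison $v\ge u$ immediately delivers membership in $\hat\Omega_1$. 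This is more elementary and arguably cleaner for the lemma at hand, while the paper's method yields the additional information that the solutions vary monotonically along the whole segment, which is thematically closer to the differential computations used later in the proof of Theorem~\ref{thm:mainresultA}. Your acknowledgement that Theorem~\ref{subsup} is stated for smooth $F$ whereas here $F$ is only H\"older in $z$ is apt; the standard monotone-iteration proof of the sub--supersolution method needs no smoothness in $z$, and the approximation argument you sketch (using Lemma~\ref{lemma:aProiriEstimates} for uniform bounds) also closes this gap.
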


\begin{proof}
Indeed, define $f:\Bbb{D}\times[0,1]\rightarrow\Bbb{R}$ by
\begin{equation*}
f_t(z) := f(z,t) := (1-t)f + tg\ .
\end{equation*}
and note that $(f_t(z))_{t\in[0,1]}$ is non-increasing for all $z$. Let $I\subseteq[0,1]$ denote the set of all $t\in[0,1]$ with the property that, for all $s\in[0,t]$, $f_s\in\hat{\mathcal{U}}$. By hypothesis, $0\in I$ and, by Lemma \ref{lemma:openness}, $I$ is open. We now show that $I$ is closed. Indeed, let $t_0$ denote the supremum of $I$. By Lemma \label{Lemma:openness} there exists a smooth family $(u_t)_{t\in[0,t_0[}$ in $C^{2,\alpha}(\Bbb{D})$ such that, for all $t$, $(u_t,f_t)\in\hat{\Omega}_1$, and,
\begin{equation*}
\hat{\mathcal{F}}(u_t,f_t) = 0\ .
\end{equation*}
Note that this means that the time derivative $\dot{u}_t$ is also an element of $C^{2,\alpha}(\Bbb{D})$ for all $t$.

We first show that, for all $z\in\Bbb{D}$, $(e^{-4u_t(z)}f_t(z))_{t\in[0,t_0[}$ is non-increasing in $t$. Indeed, differentiating the identity $\hat{\mathcal{F}}(u_t,f_t)=0$ with respect to $t$ yields, for all $t$,
\begin{equation*}
\Delta^h\dot{u}_t=2e^{2u_t}(1-e^{-4u_t}f_t)\dot{u}_t + e^{-2u_t}(g-f)\ .
\end{equation*}
Since $(u_t)_{t\in[0,t_0[}$ varies smoothly in $C^{2,\alpha}(\Bbb{D})$ with $t$, $\dot{u}_t$ is also an element of $C^{2,\alpha}(\Bbb{D})$ for all $t$. Now choose $t\in[0,t_0[$, and denote
\begin{equation*}\eqalign{
\delta &:= \inf_{z\in\Bbb{D}}2e^{2u_t(z)}\big(1-e^{-4u_t(z)}f_t(z)\big)\ ,\ \text{and}\cr
c &:= \inf_{z\in\Bbb{D}}\dot{u}_t(z)\ .\cr}
\end{equation*}
We claim that $c\geq 0$. Indeed, suppose the contrary. By the Omori-Yau maximum principle, there exists $z_0\in\Bbb{D}$ such that
\begin{equation*}
(\Delta^h\dot{u}_t)(z_0) > \frac{-c\delta}{2},\ \text{and},\ u_t(z_0) < \frac{-c}{2}\ .
\end{equation*}
Since $g\leq f$, at this point
\begin{equation*}
\Delta^h\dot{u}_t-2e^{2u_t}(1-e^{-4u_t}f_t)\dot{u}_t-e^{-2u_t}(g-f) > \frac{-c\delta}{2} + \frac{c\delta}{2} = 0\ .
\end{equation*}
This is absurd, and it follows that $c\geq 0$ as asserted. In other words, for all $t$,
\begin{equation*}
\dot{u}_t\geq 0\ ,
\end{equation*}
so that $(u_t)_{t\in[0,t_0[}$ is non-decreasing in $t$, and it follows that $(e^{-4u_t(z)}f_t(z))_{t\in[0,t_0[}$ is non-increasing in $t$, as asserted.

We now prove a compactness property of the family $(u_t)_{t\in[0,t_0[}$. By Lemma \ref{lemma:aProiriEstimates}, there exists $C_1>0$ such that, for all $t\in[0,t_0[$,
\begin{equation*}
\|u_t\|_{C^0}<C_1\ .
\end{equation*}
Since $\hat{\mathcal{F}}$ is quasilinear, by the Schauder estimates, there exists $C_2>0$ such that, for all $t\in[0,t_0[$,
\begin{equation*}
\|u_t\|_{C^{2,\alpha}}<C_2\ .
\end{equation*}
It follows by the Arzela-Ascoli theorem $(u_t)_{t\in[0,t_0[}$ is a relatively compact subset of $C^{2,\alpha}(\Bbb{D})$ in the $C^{2,\beta}_\oploc$ topology, for all $\beta<\alpha$.

By compactness, the family $(u_t)_{t\in[0,t_0[}$ has a $C^{0,\beta}_\oploc(\Bbb{D})$-accumulation point $v\in C^{2,\alpha}(\Bbb{D})$ such that $\hat{\mathcal{F}}(v,f_{t_0})=0$. By monotonicity, $(v,f_{t_0})\in\hat{\Omega}_1$, so that $f_{t_0}\in\hat{\mathcal{U}}_1$. It follows that $t_0\in I$, and $I$ is therefore closed. Since $0$ is trivially an element of $I$, it follows by connectedness that so too is $1$, and this completes the proof.
\end{proof}

Note now that the function
\begin{equation*}
\phi\mapsto\left|\phi\right|_h^2
\end{equation*}
trivially maps $\mathcal{Q}_\opbdd(\Bbb{D})$ smoothly into $C^{0,1}(\Bbb{D})$. Lemma \ref{lemma:openness} thus immediately yields the following result.

\begin{lemma}\label{lemma:opennessB}
$\mathcal{Z}\cap\Omega_1$ is a Banach submanifold modelled on $\mathcal{Q}_\opbdd(\Bbb{D})$ and the canonical projection $\pi:\mathcal{Z}\cap\Omega_1\rightarrow\mathcal{U}_1$ is a diffeomorphism onto its image.
\end{lemma}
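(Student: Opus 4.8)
The plan is to deduce this statement as a formal pull-back of Lemma~\ref{lemma:openness} along the ``squaring'' map
\[
S:\mathcal{Q}_\opbdd(\Bbb{D})\longrightarrow C^{0,1}(\Bbb{D})\ ,\qquad \phi\longmapsto|\phi|_h^2\ ,
\]
which, as remarked just before the statement, is a smooth map between Banach spaces. The first step is to record the factorizations $\mathcal{F}=\hat{\mathcal{F}}\circ(\Id\times S)$ and $\Omega_1=(\Id\times S)^{-1}(\hat{\Omega}_1)$, both of which are immediate on comparing \eqref{eqn:FundamentalFunctional} with \eqref{eqn:DefOfHatF} and \eqref{eqn:OmegaLambda} with \eqref{eqn:OmegaLambdaAgain}. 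These yield at once
\[
\mathcal{Z}\cap\Omega_1=(\Id\times S)^{-1}\big(\hat{\mathcal{Z}}\cap\hat{\Omega}_1\big)\ ,\qquad \mathcal{U}_1=S^{-1}\big(\hat{\mathcal{U}}_1\big)\ .
\]

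Next I would feed in Lemma~\ref{lemma:openness}, which exhibits $\hat{\mathcal{Z}}\cap\hat{\Omega}_1$ as the graph of a smooth map $\Psi:\hat{\mathcal{U}}_1\to C^{2,\alpha}(\Bbb{D})$, $f\mapsto u_f$, over the open set $\hat{\mathcal{U}}_1\subseteq C^{0,1}(\Bbb{D})$ — openness being part of that lemma, since the projection there is a local diffeomorphism and hence open. As $S$ is continuous, $\mathcal{U}_1=S^{-1}(\hat{\mathcal{U}}_1)$ is then open in $\mathcal{Q}_\opbdd(\Bbb{D})$. Moreover, for $\phi\in\mathcal{U}_1$ the unique conformal factor $u$ with $(u,\phi)\in\mathcal{Z}\cap\Omega_1$ is $u=\Psi(S(\phi))$, so $\mathcal{Z}\cap\Omega_1$ is precisely the graph of the smooth composite $\Psi\circ S:\mathcal{U}_1\to C^{2,\alpha}(\Bbb{D})$. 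Hence $\mathcal{Z}\cap\Omega_1$ is a Banach submanifold modelled on $\mathcal{Q}_\opbdd(\Bbb{D})$, and $\pi$ restricts to a diffeomorphism onto $\mathcal{U}_1$, with inverse $\phi\mapsto(\Psi(S(\phi)),\phi)$.

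Since everything in sight is soft, I do not expect a real obstacle: the argument is essentially the single sentence ``apply Lemma~\ref{lemma:openness} along $S$''. The only two points warranting a line of care are (i) the assertion that $S$ genuinely takes values in $C^{0,1}(\Bbb{D})$ for the Poincaré metric and is smooth there — this rests on the fact that a holomorphic quadratic differential bounded in the Poincaré metric has bounded hyperbolic gradient, so $|\phi|_h^2$ is Lipschitz, together with smoothness of the quadratic form $\varphi\mapsto|\varphi|^2$ — and (ii) transferring the graph description cleanly, i.e. checking that fibrewise uniqueness of $u$ over $\phi\in\mathcal{U}_1$ is inherited from Lemma~\ref{lemma:uniqueness} through the factorization. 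Neither of these is delicate.
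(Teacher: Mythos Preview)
Your proposal is correct and is precisely the paper's approach: the paper simply notes that $\phi\mapsto|\phi|_h^2$ is a smooth map $\mathcal{Q}_\opbdd(\Bbb{D})\to C^{0,1}(\Bbb{D})$ and then states that Lemma~\ref{lemma:openness} immediately yields Lemma~\ref{lemma:opennessB}. Your write-up spells out the factorizations and the graph description more explicitly than the paper bothers to, but the content is identical.
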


\noindent Likewise, Lemma \ref{lemma:monotonicity} immediately yields the following result.

\begin{lemma}\label{lemma:monotonicityB}
Let $\phi,\psi\in\mathcal{Q}_\opbdd(\Bbb{D})$ be holomorphic quadratic differentials. If $\phi\in\mathcal{U}_1$ and if, for all $z$,
\begin{equation}
\left|\psi(z)\right| \leq \left|\phi(z)\right|\ ,
\end{equation}
then $\psi\in\mathcal{U}_1$. In particular, $\mathcal{U}_1$ is star-shaped about the origin.
\end{lemma}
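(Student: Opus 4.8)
The plan is to deduce this from Lemma \ref{lemma:monotonicity} by transporting everything through the map $\phi\mapsto\left|\phi\right|_h^2$, which, as already noted, carries $\mathcal{Q}_\opbdd(\Bbb{D})$ smoothly into $C^{0,1}(\Bbb{D})$. First I would record the elementary compatibility: for every $u\in C^{2,\alpha}(\Bbb{D})$ and every holomorphic quadratic differential $\phi$ one has $\mathcal{F}(u,\phi)=\hat{\mathcal{F}}(u,\left|\phi\right|_h^2)$, and, comparing \eqref{eqn:OmegaLambda} with \eqref{eqn:OmegaLambdaAgain}, $(u,\phi)\in\Omega_\lambda$ holds if and only if $(u,\left|\phi\right|_h^2)\in\hat{\Omega}_\lambda$. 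Hence $(u,\phi)\in\mathcal{Z}\cap\Omega_1$ if and only if $(u,\left|\phi\right|_h^2)\in\hat{\mathcal{Z}}\cap\hat{\Omega}_1$, so that $\phi\in\mathcal{U}_1$ implies $\left|\phi\right|_h^2\in\hat{\mathcal{U}}_1$; and, conversely, whenever $\psi$ is a holomorphic quadratic differential with $\left|\psi\right|_h^2\in\hat{\mathcal{U}}_1$, the same equivalence returns $\psi\in\mathcal{U}_1$.

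Granting this dictionary, the main assertion is immediate. Assume $\phi\in\mathcal{U}_1$ and $\left|\psi(z)\right|\leq\left|\phi(z)\right|$ for all $z$. Then $\left|\phi\right|_h^2\in\hat{\mathcal{U}}_1$, and since $\left|\psi\right|_h^2\leq\left|\phi\right|_h^2$ pointwise, Lemma \ref{lemma:monotonicity} applied with $f:=\left|\phi\right|_h^2$ and $g:=\left|\psi\right|_h^2$ yields $\left|\psi\right|_h^2\in\hat{\mathcal{U}}_1$. As $\psi$ is holomorphic, the equivalence of the previous paragraph upgrades this to $\psi\in\mathcal{U}_1$, which is the claim.

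For the star-shapedness I would first check that $0\in\mathcal{U}_1$: the pair $(0,0)$ satisfies $\mathcal{F}(0,0)=1-1=0$ and lies trivially in $\Omega_1$, so $(0,0)\in\mathcal{Z}\cap\Omega_1$ and $0=\pi(0,0)\in\mathcal{U}_1$. Then, given any $\phi\in\mathcal{U}_1$ and any $t\in[0,1]$, the differential $t\phi$ is holomorphic and obeys $\left|t\phi(z)\right|=t\left|\phi(z)\right|\leq\left|\phi(z)\right|$ for every $z$, so the first part gives $t\phi\in\mathcal{U}_1$; thus $\mathcal{U}_1$ is star-shaped about the origin.

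Since Lemma \ref{lemma:monotonicity} already carries the analytic content, I do not expect a genuine obstacle here; the one point meriting a sentence of care is that the interpolation path $t\mapsto(1-t)\left|\phi\right|_h^2+t\left|\psi\right|_h^2$ used inside Lemma \ref{lemma:monotonicity} need not consist of squared Poincar\'e-norms of holomorphic quadratic differentials. This causes no difficulty precisely because Lemma \ref{lemma:monotonicity} is phrased for an arbitrary element of $C^{0,1}(\Bbb{D})$, so holomorphicity of $\psi$ is only invoked at the final step, when re-entering $\mathcal{Q}_\opbdd(\Bbb{D})$ from $C^{0,1}(\Bbb{D})$.
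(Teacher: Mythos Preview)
Your proposal is correct and takes essentially the same approach as the paper, which simply states that the lemma follows immediately from Lemma~\ref{lemma:monotonicity} via the map $\phi\mapsto|\phi|_h^2$. You have merely spelled out the dictionary $(u,\phi)\in\mathcal{Z}\cap\Omega_1\Leftrightarrow(u,|\phi|_h^2)\in\hat{\mathcal{Z}}\cap\hat{\Omega}_1$ that the paper leaves implicit, and your closing remark about the interpolation path living in $C^{0,1}(\Bbb{D})$ rather than among squared norms of holomorphic differentials is a helpful clarification of why the general formulation of Lemma~\ref{lemma:monotonicity} is needed.
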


\noindent Finally, Lemma \ref{lemma:aProiriEstimates} yields the following lower and upper bounds for $\mathcal{U}_\lambda$, for all $0<\lambda\leq 1$.

\begin{lemma}\label{lemma:lowerandupperbound}
For all $0<\lambda\leq 1$,
\begin{equation}
\overline{B}(0,\lambda/(1+\lambda^2)) \subseteq \mathcal{U}_\lambda \subseteq B(0,\lambda),
\end{equation}
\end{lemma}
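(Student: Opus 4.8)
The plan is to deduce both inclusions from the a priori bound $-\tfrac{1}{2}\opLn(1+\lambda^2)\leq u\leq 0$ of Lemma~\ref{lemma:aProiriEstimates} (applied with $f=\left|\phi\right|_h^2$). For the upper inclusion, suppose $\phi\in\mathcal{U}_\lambda$ and pick $u$ with $(u,\phi)\in\mathcal{Z}\cap\Omega_\lambda$, so that $(u,\left|\phi\right|_h^2)\in\hat{\mathcal{Z}}\cap\hat{\Omega}_\lambda$ and hence $u\leq 0$, i.e.\ $e^{4u}\leq 1$ pointwise. Then for all $z\in\mathbb{D}$,
\begin{equation*}
\left|\phi(z)\right|_h^2 = e^{4u(z)}\,e^{-4u(z)}\left|\phi(z)\right|_h^2 \leq e^{-4u(z)}\left|\phi(z)\right|_h^2 \leq \sup_{w\in\mathbb{D}}e^{-4u(w)}\left|\phi(w)\right|_h^2 < \lambda^2,
\end{equation*}
so $\left\|\phi\right\|_{C^0}<\lambda$ and $\phi\in B(0,\lambda)$.

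For the lower inclusion, fix $\phi$ with $\left\|\phi\right\|_{C^0}\leq\lambda/(1+\lambda^2)$; the goal is to construct $u$ with $(u,\phi)\in\mathcal{Z}\cap\Omega_\lambda$. I would produce a bounded solution of $\mathcal{F}(u,\phi)=0$ via the sub/supersolution method of Theorem~\ref{subsup}, using the constant barriers $u_-:=-\tfrac{1}{2}\opLn(1+\lambda^2)$ and $u_+:=0$: one has $u_-<u_+$ since $\lambda>0$, the identity $\mathcal{F}(u_+,\phi)=-\left|\phi\right|_h^2\leq 0$ shows $u_+$ is a supersolution, and the inequality
\begin{equation*}
\frac{1}{1+\lambda^2}+(1+\lambda^2)\left\|\phi\right\|_{C^0}^2 \leq \frac{1}{1+\lambda^2}+\frac{\lambda^2}{1+\lambda^2}=1
\end{equation*}
shows $\mathcal{F}(u_-,\phi)\geq 0$, so $u_-$ is a subsolution. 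Theorem~\ref{subsup} yields a solution $u$ with $u_-\leq u\leq u_+$; since $u$ is bounded, the quasilinearity of $\mathcal{F}$ and the interior Schauder estimates give $u\in C^{2,\alpha}(\mathbb{D})$, so $(u,\phi)\in\mathcal{Z}$. Moreover $u\geq u_-$ forces $e^{-4u(z)}\leq(1+\lambda^2)^2$, whence
\begin{equation*}
\sup_{z\in\mathbb{D}}e^{-4u(z)}\left|\phi(z)\right|_h^2 \leq (1+\lambda^2)^2\left\|\phi\right\|_{C^0}^2 \leq \lambda^2 .
\end{equation*}

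The point requiring care — and the step I expect to be the main obstacle — is that membership in $\Omega_\lambda$ demands the \emph{strict} inequality $\sup_z e^{-4u(z)}\left|\phi(z)\right|_h^2<\lambda^2$. When $\left\|\phi\right\|_{C^0}<\lambda/(1+\lambda^2)$ the last display is already strict, so only the sphere $\left\|\phi\right\|_{C^0}=\lambda/(1+\lambda^2)$ is at issue. There $\mathcal{F}(u_-,\phi)(z)$ vanishes exactly at the points where $\left|\phi(z)\right|_h=\left\|\phi\right\|_{C^0}$ and is strictly positive elsewhere; setting $w:=u-u_-\geq 0$, a short computation of the type in the proof of Lemma~\ref{lemma:uniqueness} gives $\Delta^h w\leq 2w$, so by the strong maximum principle $w$ is either identically zero or everywhere positive. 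The first possibility is excluded because it would make $\left|\phi\right|_h$ constant, whereas no nonzero holomorphic quadratic differential over $\mathbb{D}$ has constant Poincar\'e norm (otherwise $\log\left|\phi\right|_h^2$ would be constant, contradicting $\Delta^h\log\left|\phi\right|_h^2=-4$ away from the zeros of $\phi$). Thus $u>u_-$ pointwise, and it remains to promote this to the uniform bound $\inf_{\mathbb{D}}u>-\tfrac{1}{2}\opLn(1+\lambda^2)$ — equivalently, to rule out a sequence escaping to $\partial\mathbb{D}$ along which $u\to-\tfrac{1}{2}\opLn(1+\lambda^2)$ and $\left|\phi\right|_h\to\left\|\phi\right\|_{C^0}$ at once; once this is done, $e^{-4u(z)}\left|\phi(z)\right|_h^2\leq e^{-4\inf u}\left\|\phi\right\|_{C^0}^2<(1+\lambda^2)^2\left\|\phi\right\|_{C^0}^2=\lambda^2$, so $\phi\in\mathcal{U}_\lambda$. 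Controlling the behavior of $u$ near the ideal boundary here — likely by combining the equation for $u$ with the harmonicity of $\log\left|\phi\right|^2$ off the zeros of $\phi$ — is the genuinely delicate part of the argument.
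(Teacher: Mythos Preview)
Your argument for the upper inclusion and for the open-ball part of the lower inclusion is exactly what the paper does: the paper assumes the \emph{strict} inequality $\sup_z\left|\phi(z)\right|_h<\lambda/(1+\lambda^2)$, applies Theorem~\ref{subsup} with the constant barriers $u_-=-\tfrac12\opLn(1+\lambda^2)$ and $u_+=0$, and then reads off $e^{-2u}\left|\phi\right|_h<(1+\lambda^2)\cdot\lambda/(1+\lambda^2)=\lambda$. In other words, despite the $\overline{B}$ in the statement, the paper's own proof only establishes $B(0,\lambda/(1+\lambda^2))\subseteq\mathcal{U}_\lambda$; note that this is precisely what is claimed in Theorem~\ref{thm:mainresultB}(3) and in Theorem~\ref{intro:thm:convexityII}, so the closure in the lemma statement appears to be a slip rather than a genuine claim.

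Consequently, the portion of your proposal you flag as ``the genuinely delicate part'' --- promoting the pointwise strict inequality $u>u_-$ to a uniform one on the sphere $\|\phi\|_{C^0}=\lambda/(1+\lambda^2)$ --- is work the paper does not attempt at all. Your idea there (strong maximum principle plus the observation that $\Delta^h\log|\phi|_h^2=-4$ off the zeros, so $|\phi|_h$ cannot be constant) is a reasonable opening move, but as you yourself note it does not by itself rule out $\inf_{\mathbb{D}}u=-\tfrac12\opLn(1+\lambda^2)$ along a sequence escaping to $\partial\mathbb{D}$; nothing in the paper provides a tool for this either. So: for the result the paper actually proves and uses, your proof is correct and essentially identical to the paper's; the remaining boundary case is a genuine gap, but it is a gap in the paper's stated lemma as well, not only in your proposal.
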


\begin{proof} Suppose first that $\phi\in\mathcal{U}_\lambda$, and let $u\in C^{2,\alpha}$ be such that $(u,\phi)\in\mathcal{Z}\cap\Omega_\lambda$. By \eqref{lemma:aProiriEstimates}, $u\leq0$, so that, for all $z\in\mathbb{D}$,
\begin{equation*}
\left|\phi\right|_h < e^{2u(z)}\lambda < \lambda\ .
\end{equation*}
It follows that
\begin{equation*}
\sup_{z\in\Bbb{D}}\left|\phi(z)\right|_h < \lambda\ ,
\end{equation*}
as desired.

Now suppose that
\begin{equation*}
\sup_{z\in\mathbb{D}}\left|\phi(z)\right|_h < \frac{\lambda}{1+\lambda^2}\ .
\end{equation*}
Upon applying Lemma \ref{subsup} with subsolution $u_-:=-\frac{1}{2}\opLn(1+\lambda^2)$ and $u_+:=0$, we see that there exists $u\in C^{2,\alpha}(\mathbb{D})$ such that $\mathcal{F}(u,\phi)=0$, and
\begin{equation*}
-\frac{1}{2}\opLn(1+\lambda^2)\leq u\leq 0\ .
\end{equation*}
In particular, for all $z\in\mathbb{D}$,
\begin{equation*}
e^{-2u(z)}\left|\phi(z)\right|_h < (1+\lambda^2)\cdot\frac{\lambda}{1+\lambda^2} = \lambda\ .
\end{equation*}
It follows that $(u,\phi)\in\mathcal{Z}\cap\Omega_\lambda^2$, so that $\phi\in\mathcal{U}_\lambda^2$, as desired.
\end{proof}

It now only remains to prove convexity.

\begin{proof}[Proof of Theorem \ref{thm:mainresultA}]Choose $\phi_0,\phi_1\in\mathcal{U}_1$ such that, for all $t\in[0,1]$,
\begin{equation*}
\phi_t := (1-t)\phi_0 + t\phi_1\in\mathcal{U}_1\ .
\end{equation*}
For all $t$, let $u_t\in C^{2,\alpha}(\Bbb{D})$ denote the unique function such that $(u_t,\phi_t)\in\mathcal{Z}\cap\Omega_1$, and let $\kappa_t:\mathbb{D}\rightarrow\mathbb{R}$ denote the extrinsic curvature function of its corresponding almost fuchsian conformal minimal immersion. Note that, by \eqref{eqn:BoundedCurvatureEquivalences}, for all $t$, and for all $z\in\mathbb{D}$,
\begin{equation*}
\kappa_t(z) = -e^{-4u_t(z)}\left|\phi_t(z)\right|_h^2.
\end{equation*}

We claim that
\begin{equation*}
\ddot{u}_t + 4\dot{u}_t^2\leq 0.
\end{equation*}
Indeed, differentiating the equation $\mathcal{F}(u_t,\phi_t)=0$ twice with respect to $t$ yields
\begin{equation}\label{cvx}
\eqalign{
\Delta\dot u_t&=2\dot u_t e^{2u_t} -2\dot u_te^{-2u_t}|\phi_t|^2+2e^{-2u_t}g(\phi_t,\phi_1-\phi_0)\ ,\ \text{and}\cr
\Delta\ddot u_t&=2\ddot u_t(e^{2u_t}-e^{-2u_t}|\phi_t|^2)+4\dot u_t^2(e^{2u_t}+e^{-2u_t}|\phi_t|^2)\cr
&\qquad\qquad-8\dot u_te^{-2u_t}g(\phi_t,\phi_1-\phi_0)+2e^{-2u_t}|\phi_1-\phi_0|^2\ .\cr}
\end{equation}
Note now that
\begin{equation*}
\Delta(\dot u_t^2)=2\dot u_t\Delta\dot u_t+2|\nabla\dot u_t|^2\geq 2\dot u_t\Delta\dot u_t\ .
\end{equation*}
Combining this with \eqref{cvx} yields
\begin{equation*}\eqalign{
\Delta(\ddot u_t+4\dot u_t^2)&\geq 2\ddot u_t(e^{2u_t}-e^{-2u_t}|\phi_t|^2)+20\dot u_t^2e^{2u_t}-12\dot u_t^2e^{-2u_t}|\phi_t|^2\cr
&\qquad\qquad+8\dot u_te^{-2u_t}g(\phi_t,\phi_1-\phi_0)+2e^{-2u_t}|\phi_1-\phi_0|^2\ .\cr}
\end{equation*}
Judiciously rearranging the terms on the right-hand side, we obtain
\begin{equation*}\eqalign{
\Delta(\ddot u_t+4\dot u_t^2)&\geq 2(\ddot u_t+4\dot u_t^2)(e^{2u_t}-e^{-2u_t}|\phi_t|^2)+12\dot u_t^2(e^{2u_t}-e^{-2u_t}|\phi_t|^2)\cr
&\qquad\qquad+8\dot u_t^2e^{-2u_t}|\phi_t|^2+8e^{-2u_t}g(\dot u_t\phi_t,\phi_1-\phi_0)+2e^{-2u_t}|\phi_1-\phi_0|^2\ .\cr}
\end{equation*}
Note now that, for all $a,b\in\Bbb{R}$,
\begin{equation*}
8a^2 + 8ab + 2b^2=2(2a+b)^2\geq 0\ .
\end{equation*}
so that
\begin{equation*}\eqalign{
\Delta(\ddot u_t+4\dot u_t^2)&\geq2e^{2u_t}(1-e^{-4u_t}|\phi_t|^2)(\ddot u_t+4\dot u_t^2)+12\dot u_t^2e^{2u_t}(1-e^{-4u_t}|\phi_t|^2)\cr
&\geq2e^{2u_t}(e^{2u_t}-e^{-2u_t}|\phi_t|^2)(\ddot u_t+4\dot u_t^2)\ ,\cr}
\end{equation*}
and it follows by the Omori-Yau maximum principle that
\begin{equation*}
\ddot u_t+4\dot u_t^2\leq 0\ ,
\end{equation*}
as asserted.

We now prove concavity. Indeed, for all $z\in\mathbb{D}$,
\begin{equation*}\eqalign{
e^{4u_t(z)}\partial^2_t\kappa_t(z)&=-e^{4u_t(z)}\partial^2_t(e^{-4u_t(z)}|\phi_t(z)|^2)\cr
&=4(\ddot u_t(z)+4\dot u_t^2(z))|\phi_t(z)|^2-32\dot u_t(z)^2|\phi_t|^2\cr
&\qquad\qquad+16\dot u_t(z)g(\phi_t(z),\phi_1(z)-\phi_0(z))-2|\phi_1(z)-\phi_0(z)|^2\cr
&=+4(\ddot u_t(z)+4\dot u_t(z)^2)|\phi_t(z)|^2 - 2|4\dot{u}_t(z)\phi_t(z) - (\phi_1(z)-\phi_0(z))|^2\cr
&\leq 0\ ,\cr}
\end{equation*}
as desired.
\end{proof}

We now prove Theorem \ref{thm:mainresultA}.

\begin{proof}[Proof of Theorem \ref{thm:mainresultA}]
Items $(1)$ and $(3)$ follow by Lemmas \ref{lemma:opennessB} and \ref{lemma:lowerandupperbound} respectively. By Lemma \ref{lemma:monotonicity}, for all $\lambda\in[0,1]$, $\mathcal{U}_\lambda$ is star-shaped. We now prove convexity. Let $\lambda\in[0,1]$, $\phi_0,\phi_1\in\mathcal{U}_\lambda$, and $\epsilon>0$ be such that $\phi_0,\phi_1\in\mathcal{U}_{\lambda-\epsilon}$. Consider the family
\begin{equation*}
\phi_{s,t}:=s(1-t)\phi_0 + st\phi_1\ .
\end{equation*}
Let $I\subseteq[0,1]$ denote the set of all $s\in[0,1]$ for which $\phi_{s,t}\in\mathcal{U}_\lambda$ for all $t\in[0,1]$, and, for all $(s,t)\in I\times[0,1]$, let $u_{s,t}\in C^{2,\alpha}(\mathbb{D})$ denote the unique function such that $(u_{s,t},\phi_{s,t})\in\mathcal{Z}\cap\Omega_\lambda$. We use a connectedness argument to show that $I$ coincides with $[0,1]$. Indeed, trivially, $0\in I$ and, by Lemma \ref{lemma:opennessB}, $I$ is open. We now show that $I$ is also closed. Indeed, let $(s_m,t_m)_{m\in\Bbb{N}}$ be a sequence in $I\times[0,1]$ converging to $(s_\infty,t_\infty)$, say, and, for all $m$, denote $u_m:=u_{s_m,t_m}$. By Lemma \ref{lemma:aProiriEstimates}, for all $m$,
\begin{equation*}
-\frac{1}{2}\opLn(1+\lambda^2)\leq u_m\leq 0\ .
\end{equation*}
Since $\mathcal{F}$ is quasi-linear, it follows by the classical theory of elliptic operators that $(u_m)_{m\in\Bbb{N}}$ is precompact in $C^{2,\alpha}(\Bbb{D})$ with respect to the $C^{2,\alpha}_\oploc$ topology. Let $u_\infty\in C^{2,\alpha}(\Bbb{D})$ be an accumulation point. We claim that $(u_\infty,\phi_{s_\infty,t_\infty})\in\mathcal{Z}\cap\Omega_\lambda$. Indeed, by convexity, for all $z\in\mathbb{D}$, and for all $m$,
\begin{equation*}
e^{-4u_m(z)}\left|\phi_{s_m,t_m}(z)\right|_h^2 < \opMax\big(e^{-4u_{s_m,0}(z)}\left|\phi_{s_m,0}(z)\right|_h^2,e^{-4u_{s_m,1}(z)}\left|\phi_{s_m,1}(z)\right|_h^2\big)\leq(\lambda-\epsilon)^2\ .
\end{equation*}
Upon taking limits, it follows that, for all $z\in\mathbb{D}$,
\begin{equation*}
e^{-4u_\infty(z)}\left|\phi_{s_\infty,t_\infty}(z)\right|_h^2 \leq(\lambda-\epsilon)^2\ ,
\end{equation*}
so that $\phi_{s_\infty,t_\infty}\in\mathcal{U}_\lambda$. Since $(s_m,t_m)_{m\in\Bbb{N}}$ is arbitrary, it follows that $I$ is indeed closed, as asserted. By connectedness, $I$ coincides with the entire interval $[0,1]$. In particular, $1\in I$, so that, for all $t\in[0,1]$,
\begin{equation*}
(1-t)\phi_0 + t\phi_1 \in \mathcal{U}_\lambda.
\end{equation*}
That is, $\mathcal{U}_\lambda$ is convex, as desired.
\end{proof}

\bibliographystyle{alpha}
\bibliography{references_articles}
\end{document}